\newtheorem{theorem}{Theorem}[section]
\newtheorem{lemma}{Lemma}[section]
\theoremstyle{definition}
\newtheorem{definition}{Definition}[section]
\newtheorem{remark}{Remark}[section]
\newcommand{\SL}{\operatorname{SL}}
\newcommand{\GL}{\operatorname{GL}}
\newcommand{\Ad}{\operatorname{Ad}}
\newcommand{\Stab}{\operatorname{Stab}}
\begin{document}
\title[Orbit closures of sparse unipotent orbits]{Topological rigidity of closures of certain sparse unipotent orbits in finite-volume quotients of $\prod_{i=1}^k\SL_2(\mathbb R)$}
\author{Cheng Zheng}
\address{School of Mathematical Sciences, Shanghai Jiao Tong University, China}
\email{zheng.c@sjtu.edu.cn}

\subjclass[2020]{Primary 37A17; Secondary 11J99}

\thanks{The author acknowledges the support of NSFC grant number 12201398, and the support by Institute of Modern Analysis-A Frontier Research Center of Shanghai.}
\maketitle

\vspace{-0.1in}
\begin{abstract}
We give a simple proof about the topological rigidity of closures of certain sparse unipotent orbits in $G/\Gamma$ where $G=\prod_{i=1}^k\SL_2(\mathbb R)$ and $\Gamma$ is an irreducible lattice in $G$.
\end{abstract}

\section{Introduction}
Ratner's Theorem is one of the central theorems in homogeneous dynamics which concerns the measure rigidity of unipotent group actions, the topological rigidity of closures of unipotent orbits and the equidistribution of one-parameter unipotent orbits \cite{D,M1,R1,R2,MT}. There has been increasing attention recently to the extension of Ratner's theorem to sparse unipotent orbits, namely, to investigating the distribution of the sparse orbit $\{u(t)\cdot p:t\in\Omega\}$ in $G/\Gamma$ where $\{u(t)\}_{t\in\mathbb R}$ is a unipotent flow on a homogeneous space $G/\Gamma$ and the parameter $t$ is sampled in a discrete subset of $\mathbb R$, and it is listed as one of the open problems in homogeneous dynamics \cite{G,Sh}.  The equidistribution of sparse unipotent orbits (sparse equidistribution problem) has been studied in the setting where $G/\Gamma$ is a nilmanifold or a finite-volume quotient of $\SL_2(\mathbb R)$, and one can refer to \cite{SU, Z, M2, TV, V, B1, GT, K1, K2, L, LS1, LS2} for related results.

The aim of this paper is to prove the topological rigidity of closures of certain sparse unipotent orbits in $G/\Gamma$ where $G=\prod_{i=1}^k\SL_2(\mathbb R)$ and $\Gamma$ is an irreducible lattice in $G$. Recall that a natural number $x$ is said to be an $N$-almost prime for some $N\in\mathbb N$ if there are at most $N$ prime factors (counted with multiplicities) in the prime factorization of $x$. We denote by $\Omega(N)$ the set of $N$-almost primes in $\mathbb N$. Let $\{u_1(t)\}_{t\in\mathbb R}$ be a one-parameter unipotent subgroup of $\SL_2(\mathbb R)$ and $$u(t)=(u_1(t),\underbrace{e,\cdots,e}_{(k-1)\textup{ times}})\in G$$ where $e$ is the identity in $\SL_2(\mathbb R)$, and write $U_0=\{u_1(t)\}_{t\in\mathbb R},\;U=\{u(t)\}_{t\in\mathbb R}$. In this paper, we give simple proofs of the following two theorems which extend the results of \cite{Z24}.
\begin{theorem}\label{mthm1}
There exists $L\in\mathbb N$ such that for any $p\in G/\Gamma$, either the closure of the sparse $U$-orbit $\{u(n)\cdot p: n\in\Omega(L)\}$ is equal to $G/\Gamma$, or there exists an abelian unipotent subgroup $F$ of $G$ containing $U$ such that $F\cdot p$ is a compact orbit in $G/\Gamma$ isomorphic to a $k$-dimensional torus.
\end{theorem}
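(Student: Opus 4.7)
The plan is to split the analysis via Ratner's orbit closure theorem applied to the one-parameter unipotent flow $u(t)$, and then handle the two resulting cases separately.

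\emph{Step 1 (orbit-closure dichotomy).} I would first apply Ratner's orbit closure theorem to the $U$-action on $G/\Gamma$: the closure $\overline{U\cdot p}$ equals $H\cdot p$ for some closed connected subgroup $H\supseteq U$ such that $H\cdot p$ carries a finite $H$-invariant measure. Using that $\Gamma$ is an irreducible lattice in $G=\prod_{i=1}^k\SL_2(\mathbb{R})$, I would classify the admissible $H$ by projecting to each factor. Each $\pi_i(H)$ is a closed connected subgroup of $\SL_2(\mathbb{R})$ (with $\pi_1(H)\supseteq U_0$), and the combination of irreducibility of $\Gamma$, the finite-volume requirement on $H\cdot p$, and standard rigidity for intermediate homogeneous orbits rules out the mixed situations where $\pi_i(H)=\SL_2(\mathbb{R})$ in some factors but a proper subgroup in others. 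Thus either $H=G$, or $\pi_i(H)$ is a one-parameter unipotent subgroup of $\SL_2(\mathbb{R})$ for every $i$, forcing $H$ to be a $k$-dimensional abelian unipotent subgroup $F$ of $G$ containing $U$, with $F\cdot p$ a compact $k$-torus --- exactly the second alternative of the theorem.

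\emph{Step 2 (sparse density when $H=G$).} In the case $\overline{U\cdot p}=G/\Gamma$, I would prove density of $\{u(n)\cdot p:n\in\Omega(L)\}$ in $G/\Gamma$ for some uniform $L\in\mathbb{N}$. For any nonempty open $V\subseteq G/\Gamma$, I would lower-bound the density in $[1,T]$ of the return-time set $\{n\in\mathbb{N}:u(n)\cdot p\in V\}$ through a uniform effective equidistribution statement for $\{u(t)\cdot p:t\in[0,T]\}$ in $G/\Gamma$, and then invoke a Sarnak--Ubis type sieve argument: for $L$ sufficiently large (depending only on the effective rate, not on $p$ or $V$), $\Omega(L)$ meets every return-time set of this form, yielding density of the sparse orbit in $G/\Gamma$.

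\emph{Main obstacle.} The decisive difficulty is obtaining the required uniform-in-$p$ effective equidistribution of $\{u(t)\cdot p\}_{t\in[0,T]}$ in $G/\Gamma$. Since $u(t)$ acts only on the first factor it is not mixing on the product, so ordinary matrix-coefficient decay for the full $G$-action does not immediately supply the rate. The irreducibility of $\Gamma$ must be used in an essential way --- for example, spectrally through Sobolev norm estimates on $L^2(G/\Gamma)$, or by invoking an effective Ratner-type theorem tailored to $\SL_2(\mathbb{R})^k/\Gamma$ --- to transfer the single-factor Sarnak--Ubis mechanism to the product setting. I expect this step to carry the technical weight of the argument, while Steps~1 and~2 above are largely structural and should follow from standard Ratner-theoretic tools.
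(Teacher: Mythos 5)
Your plan differs from the paper's in a way that matters. You propose to invoke Ratner's orbit-closure theorem up front and dichotomize on whether $\overline{U\cdot p}=G/\Gamma$; the paper instead dichotomizes directly on whether the backward diagonal orbit $\{a(-t)\cdot p\}_{t\geq 0}$ diverges, never invoking Ratner at all. This is not a cosmetic choice. The effective equidistribution result (Theorem~\ref{equid}) that feeds the sieve carries an error term involving $\eta(a(-\ln T/\alpha)\cdot p)^{-b}$, i.e.\ the injectivity radius of $p$ pushed backward by the diagonal flow. The sieve argument in Section~\ref{sieve} only closes because one can extract a sequence of times $T_i\to\infty$ along which $a(-\ln T_i/\alpha)\cdot p$ returns to a fixed compact set, so that $r=T_i^a\eta^b\sim T_i^a$ and the remainder $R$ is genuinely small. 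Your Step~2 therefore cannot run as stated without an additional lemma: that $\overline{U\cdot p}=G/\Gamma$ forces non-divergence of $\{a(-t)\cdot p\}$. You flag "uniform effective equidistribution" as the main obstacle but locate the difficulty in passing from the single-factor setting to the product via spectral estimates; in fact that part is handled cleanly by Kleinbock--Margulis mixing plus Margulis thickening (Lemma~\ref{bl33}), and the real uniformity problem is the $p$-dependence through $\eta$, which your dichotomy does not control.

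On Step~1, the Ratner-based classification of intermediate $H$ is plausible (irreducibility of $\Gamma$ does kill the mixed cases where some $\pi_i(H)=\SL_2(\mathbb{R})$ and others are proper), but you then still need to show that when $H$ is a $k$-dimensional abelian unipotent, $F\cdot p$ is compact and $U\subset F$ with $F=\prod_\sigma\pi_\sigma(F)$; the paper achieves this in Lemma~\ref{l42} using Margulis arithmeticity of $\Gamma$ (this is where $k\geq 2$ enters) together with a Zassenhaus-neighborhood/connectedness argument along the orbit $\{a(-t)\cdot p\}$, ingredients your sketch does not mention. So while the Ratner route is a legitimate alternative in principle, it trades the paper's direct and elementary dichotomy for a deeper input, and it still leaves you with the recurrence lemma for $a(-t)\cdot p$ that the paper's formulation makes tautological. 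Concretely: either add a proof that density of $U\cdot p$ implies non-divergence of $a(-t)\cdot p$ (essentially Hedlund/Dani in each factor, compatible with arithmeticity), or switch the dichotomy to the paper's and handle the divergent case by constructing $F$ from returns of $\Stab(p)$ under $a(-t)$-conjugation.
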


\begin{theorem}\label{mthm2}
There exists $\gamma_0>0$ such that for any $0<\gamma<\gamma_0$ and any $p\in G/\Gamma$, either the closure of the sparse $U$-orbit $\{u(n^{1+\gamma})\cdot p: n\in\mathbb N\}$ is equal to $G/\Gamma$, or there exists an abelian unipotent subgroup $F$ of $G$ containing $U$ such that $F\cdot p$ is a compact orbit in $G/\Gamma$ isomorphic to a $k$-dimensional torus.
\end{theorem}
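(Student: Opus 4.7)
The plan is to establish Theorem \ref{mthm2} in two stages, mirroring the proof structure for Theorem \ref{mthm1}. \textbf{Stage 1 (Ratner dichotomy).} Apply Ratner's orbit closure theorem to the continuous $U$-orbit of $p$, obtaining that $\overline{Up} = Hp$ for some closed connected subgroup $H \supseteq U$ carrying a finite $H$-invariant measure on $Hp$. A classification of such $H$ in the product setting $G = \prod_{i=1}^k \SL_2(\mathbb{R})$, using the irreducibility of $\Gamma$ and the fact that $U$ lies only in the first factor, shows that $H$ is either all of $G$ or solvable and contained in an abelian unipotent subgroup $F$ of dimension $k$ containing $U$, with $Fp$ a compact $k$-torus. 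The second alternative gives the stated conclusion of the theorem immediately, so the real content is the case $\overline{Up} = G/\Gamma$.

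\textbf{Stage 2 (Sparse density in the generic case).} Assume $\overline{Up} = G/\Gamma$. The goal is to show that $\{u(n^{1+\gamma})p : n \in \mathbb{N}\}$ is dense in $G/\Gamma$. For this I would invoke an effective equidistribution statement for continuous $U$-orbits of the form
\begin{equation*}
\Big|\frac{1}{T}\int_0^T f(u(t)p)\,dt - \int_{G/\Gamma} f\,d\mu\Big| \le C(f,p)\,T^{-\kappa} \qquad (T \ge 1),
\end{equation*}
valid for smooth compactly supported $f$ with controlled Sobolev norm and for $p$ away from the compact torus orbits and from the cusp. Such an estimate follows from Howe--Moore-type decay of matrix coefficients for the $\SL_2(\mathbb{R})$-factor acting on $L^2_0(G/\Gamma)$, which holds with a uniform rate owing to the spectral gap inherited from the irreducibility of $\Gamma$, combined with a Margulis-function argument controlling non-divergence.

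To pass from continuous to sparse I would adapt the method of Sarnak--Ubis--Venkatesh. Split $\mathbb{N}$ into dyadic windows $[N,2N]$ and Taylor-expand
\begin{equation*}
n^{1+\gamma} = N^{1+\gamma} + (1+\gamma)N^\gamma (n-N) + O\bigl(N^{\gamma-1}(n-N)^2\bigr).
\end{equation*}
Up to a uniformly small error each window samples the $U$-orbit based at $q_N := u(N^{1+\gamma})p$ along an arithmetic progression of step $\delta_N = (1+\gamma)N^\gamma$ and total time $\sim N$. The resulting discrete sum is compared to $\int_0^N f(u(t)q_N)\,dt$ using the renormalization $a_{\log\delta_N}\,u(m)\,a_{-\log\delta_N} = u(\delta_N m)$ in the first factor, which reduces the comparison to a further matrix-coefficient estimate; choosing $\gamma_0$ small enough guarantees that the discretization loss is dominated by the mixing rate $N^{-\kappa}$, forcing density of the sparse orbit as $N \to \infty$.

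The main obstacle will be the effective equidistribution input on the product space. In the single-factor case ($k=1$, treated in \cite{Z24}) this is essentially classical. For general $k$, one must produce error constants $C(f,p)$ that are uniform in $p$ outside a fixed neighborhood of the (countable) family of compact torus orbits, and verify that the Margulis-function and spectral-gap arguments extend cleanly to the multi-factor setting where $U$ acts trivially on all but one factor. Once this is secured, the Sarnak--Ubis--Venkatesh comparison is a routine extension of the one-factor argument, and the restriction $\gamma < \gamma_0$ serves only to balance the renormalization error against $\kappa$.
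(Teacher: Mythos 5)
Your high-level plan (a dichotomy followed in the generic case by a Taylor-expansion/renormalization argument on dyadic windows) has the right shape, but two key points are off relative to what actually makes the argument close.

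First, the dichotomy. You invoke Ratner's orbit closure theorem and then classify the possible closed subgroups $H$. The paper deliberately avoids Ratner and instead dichotomizes on whether the \emph{renormalized} orbit $\{\phi(n)\cdot p\}_{n\in\mathbb N}$, with $\phi(n)=a_{-\log n/2\alpha}\cdot u(n^{1+\gamma})$, diverges. If it diverges, Lemma~\ref{l51} (a Zassenhaus-neighborhood argument combined with Margulis arithmeticity and the product structure) produces a $k$-dimensional abelian unipotent $F\supset U$ with $F\cap\Stab(p)$ a lattice, giving the compact torus. If it does not diverge, the recurrence to a fixed compact set is exactly the data the effective estimate needs. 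Your Ratner-based split does not supply this recurrence: knowing $\overline{Up}=G/\Gamma$ does not by itself tell you that $\phi(M_i)\cdot p$ lands in a compact set along some sequence $M_i\to\infty$, and bridging that gap essentially reproves Lemma~\ref{l51} anyway.

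Second, and more seriously, the effective equidistribution input you posit is too strong. You assert an error of the form $C(f,p)T^{-\kappa}$ with $C(f,p)$ uniform "outside a fixed neighborhood of the compact torus orbits." No such bound holds. The correct statement (Theorem~\ref{equid} and its discrete refinement Theorem~\ref{dequid}) has an error that scales like a negative power of the injectivity radius $\eta(a(-\ln T/\alpha)\cdot q)$ at the \emph{renormalized} base point, and this radius can be arbitrarily small even for $p$ far from every compact torus orbit. The paper compensates precisely by choosing the sequence of window starts $M_i$ so that $\phi(M_i)\cdot p$ (equivalently $a(-\ln T_i/\alpha)\cdot q_i$, with $T_i=M_i^{1/2}$, $q_i=u(M_i^{1+\gamma})\cdot p$) stays in a fixed compact set, which bounds $\eta$ below. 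Without that, your "Stage~2" estimate does not close. The Taylor expansion, the step size $\delta_N\sim N^\gamma$, and the condition $\gamma<\gamma_0$ to beat the discretization loss all match the paper's Theorem~\ref{dequid} computation; the missing ingredient is the recurrence of the renormalized orbit, which is where the paper's non-Ratner dichotomy does its work.
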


\begin{remark}
Note that when $k=1$, Theorems~\ref{mthm1} and~\ref{mthm2} are exactly the main theorems of \cite{Z24}. So in the rest of the paper, we assume that $k\geq 2$.
\end{remark}

\section{Effective Equidistribution of $U$-orbits in $G/\Gamma$}
In this section, we prove the effective equidistribution of $U$-orbits in $G/\Gamma$. This topic has been studied well \cite{LG03,SU,LGJ16,TV,BV16,E,TJ18,S15,PT18,KM12,S21,BG23,KM96,DKL,BFG,EMV,M12}, and here we are mainly motivated by \cite{S,V}. We remark that significant progress has been made recently about effective equidistribution of one-parameter unipotent orbits for which one can refer to \cite{LMW23,CY24,LY22} for more details.

Let $\{a_1(t)\}_{t\in\mathbb R}$ be a one-parameter diagonalizable subgroup in $\SL_2(\mathbb R)$ such that $\{u_1(t)\}_{t\in\mathbb R}$ is the unstable horospherical subgroup of $\{a_1(t)\}_{t\in\mathbb R}$, and write $$a(t)=(a_1(t),\underbrace{e,\cdots,e}_{(k-1)\textup{ times}})\in G.$$ Then $U$ is the unstable horospherical subgroup of $\{a(t)\}_{t\in\mathbb R}$ in $G$. Let $\Ad:G\to\GL(\mathfrak g)$ be the adjoint representation of $G$ on the Lie algebra $\mathfrak g$ of $G$ induced by conjugation $x\mapsto g\cdot x\cdot g^{-1}$. Let $\mathfrak u$ be the Lie algebra of $U$. There exists a positive constant $\alpha>0$ such that $$\text{Ad}(a(t))x=e^{\alpha t}\cdot x,\quad \forall x\in\mathfrak u.$$ We denote by $\exp$ the exponential map from $\mathfrak g$ to $G$. Fix a norm on $\mathfrak g$ and let $d(\cdot,\cdot)$ be the induced distance on $G$. We denote by $B_r$ the open ball of radius $r>0$ around $e$ in $G$ and $\mu$ the $G$-invariant probability measure on $G/\Gamma$.

\begin{definition}
For any $x\in G/\Gamma$, we define the injectivity radius at $x$ by the largest number $\eta>0$ with the property that the map $$B_\eta\to B_\eta\cdot x\subset G/\Gamma,\quad g\mapsto g\cdot x$$ is injective. We denote the injectivity radius at $x$ by $\eta(x)$.
\end{definition}

We write $A\ll B$ if there exists a constant $C>0$ such that $A\leq CB$, and we will specify the constant $C>0$ in the context if necessary. If $A\ll B$ and $B\ll A$, we write $A\sim B$. For $f\in C^k(G/\Gamma)$ let $\|f\|_{p,k}$ be the Sobolev $L^p$-norm involving all the Lie derivatives of order $\leq k$ of $f$ (with respect to a basis of $\mathfrak g$). Following \cite[Lemma 9.5]{V}, we give a simple proof of the following

\begin{theorem}\label{equid}
There exist constants $a,b>0$ and $l\in\mathbb N$ such that for any $x\in G/\Gamma$ and $f\in C^\infty(G/\Gamma)$ with $\|f\|_{\infty,l}<\infty$, we have $$\left|\frac1{T}\int_0^Tf(u(s)\cdot x)ds-\int_{G/\Gamma} fd\mu\right|\ll\frac1{T^a\eta^b}\|f\|_{\infty,l}.$$ Here $\eta=\eta(a(-\ln T/\alpha)\cdot x)$ is the injectivity radius at $a(-\ln T/\alpha)\cdot x$, and the implicit constant depends only on $G/\Gamma$ and $U$.
\end{theorem}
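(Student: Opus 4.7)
The plan is to run the standard thickening-plus-mixing argument in effective form, following \cite[Lem.~9.5]{V}. The two driving ingredients are the conjugation identity $a(t)u(s)a(-t)=u(e^{\alpha t}s)$, which rescales a long $U$-orbit to a unit-length $U$-orbit at a retracted basepoint, and effective (exponential) mixing of the $a(t)$-flow on $L^2(G/\Gamma)$, coming from a uniform spectral gap for the first factor $\SL_2(\mathbb R)_1$ acting on $L^2_0(G/\Gamma)$. The latter is precisely where irreducibility of $\Gamma$ is essential: for $k\geq 2$, without it the first factor need not even act ergodically, whereas for irreducible (hence, by Margulis arithmeticity, arithmetic) $\Gamma$ one obtains a quantitative spectral gap from property $(\tau)$ together with Howe--Moore for the remaining factors.

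Setting $T'=(\log T)/\alpha$ and $y=a(-T')\cdot x$, the conjugation identity gives
$$\frac{1}{T}\int_0^T f(u(s)x)\,ds=\int_0^1 f(a(T')u(r)y)\,dr.$$
I would then thicken transversally to $U$: pick a smooth non-negative bump $F_\epsilon$ on $G$ supported in $B_\epsilon$ with $\int_G F_\epsilon\,dg=1$, taking $\epsilon\leq\eta(y)$ so that $g\mapsto gy$ is injective on the support. Replacing $f(a(T')u(r)y)$ by the averaged value $f_\epsilon(r):=\int_G F_\epsilon(v)f(a(T')u(r)vy)\,dv$ costs at most $\epsilon\|f\|_{\infty,1}$ by Taylor's theorem. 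Injectivity lets $F_\epsilon$ descend to a smooth probability density $\psi_\epsilon$ on $G/\Gamma$ with Sobolev bound $\|\psi_\epsilon\|_{2,l}\ll\epsilon^{-C}$ for some $C$ depending only on $\dim G$ and $l$. After the substitution $z\mapsto u(r)z$,
$$\int_0^1 f_\epsilon(r)\,dr=\int_0^1\int_{G/\Gamma}\psi_\epsilon(u(-r)z)\,f(a(T')z)\,d\mu(z)\,dr,$$
and applying effective mixing of $a(T')$ uniformly in $r\in[0,1]$ gives $\int_0^1 f_\epsilon(r)\,dr=\int f\,d\mu+O(T^{-\kappa}\epsilon^{-C}\|f\|_{2,l})$ for some $\kappa>0$. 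The total error is therefore $\ll(\epsilon+T^{-\kappa}\epsilon^{-C})\|f\|_{\infty,l}$ subject to $\epsilon\leq\eta(y)$; the constrained optimum (capping at $\eta$ when the unconstrained minimum $T^{-\kappa/(C+1)}$ exceeds $\eta$) produces the claimed bound $T^{-a}\eta^{-b}\|f\|_{\infty,l}$ for suitable $a,b>0$.

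The main technical obstacle is securing effective mixing for the partial flow $\{a(t)\}$, which acts only on the first $\SL_2(\mathbb R)$-factor: one cannot invoke mixing of the full Cartan of $G$ and must instead rely on a uniform spectral gap for $\SL_2(\mathbb R)_1$ on $L^2_0(G/\Gamma)$, an arithmetic input about the irreducible lattice $\Gamma$. Once that mixing statement is in hand, the thickening estimate, the Sobolev bound on $\psi_\epsilon$, and the final optimization in $\epsilon$ are standard bookkeeping.
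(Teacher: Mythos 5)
Your proposal uses the same core strategy as the paper — Margulis thickening combined with exponential mixing of $\{a(t)\}$, following Venkatesh's Lemma~9.5 — so at a high level the approaches agree. The bookkeeping differs: the paper first subdivides $[0,1]$ into pieces of length $\gamma\sim\eta$ and thickens each piece anisotropically (width $\sim\gamma$ along $U$ and $U^-$, width $\delta\ll\gamma$ along the centralizer $Z$), optimizing over $\delta$; you keep $[0,1]$ whole and thicken by a single isotropic parameter $\epsilon\le\eta$.

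There is, however, a real gap in your thickening step. You claim that replacing $f(a(T')u(r)y)$ by $f_\epsilon(r)=\int_G F_\epsilon(v)f(a(T')u(r)vy)\,dv$ ``costs at most $\epsilon\|f\|_{\infty,1}$ by Taylor's theorem.'' As a pointwise statement this is false: the displacement from $a(T')u(r)y$ to $a(T')u(r)vy$ is by $g=a(T')u(r)\,v\,u(-r)\,a(-T')$, and $\Ad(a(T'))$ expands the $\mathfrak u$-component of $\log v$ by the factor $e^{\alpha T'}=T$, so Taylor's theorem only gives a bound of order $\epsilon T\,\|f\|_{\infty,1}$, which is useless. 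The $O(\epsilon)$ bound does hold after integrating over $r\in[0,1]$, but for a different reason: the $U$-component of $v$ merely reparametrizes the $r$-integral (a boundary contribution of $O(\epsilon)$), while only the $Z$- and $U^-$-components of $v$, which $\Ad(a(T'))$ does \emph{not} expand, are controlled by a pointwise Taylor estimate. This careful separation of the expanding direction from the transverse ones is the entire content of the thickening technique, and the paper makes it explicit via the anisotropic box functions $g_{\delta,n,\gamma}$. Your opening slogan ``thicken transversally to $U$'' is the right instinct, but an isotropic bump $F_\epsilon$ supported on a ball $B_\epsilon\subset G$ together with pointwise Taylor does not implement it: you should either take $F_\epsilon$ supported on a box in $Z\cdot U^-$ and smooth the $r$-integral separately (as the paper does), or split $v$ into its $U$- and transverse components and estimate at the level of the $r$-average. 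A cosmetic remark: the mixing input from Kleinbock--Margulis is stated with $\|\cdot\|_{\infty,l}$-norms, so the Sobolev bound on $\psi_\epsilon$ should be in $\|\cdot\|_{\infty,l}$ rather than $\|\cdot\|_{2,l}$.
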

\begin{remark}
When $k=1$, effective equidistribution of horocycle orbits has been studied in \cite{S,LG03,SU,V,LGJ16,TV}. In the case that $\Gamma$ is co-compact ($k\geq2$), Theorem~\ref{equid} is proved in \cite{TJ18}. The general case about the effective equidistribution of horospherical orbits has been studied in \cite{E} using tools from spectral theory. Here the proof of Theorem~\ref{equid} only uses a thickening technique of Margulis~\cite{M04} and the exponential mixing property of $\{a(t)\}_{t\in\mathbb R}$. Note that the exponents $a$ and $b$ are not optimized and may not be as explicit as those in \cite{S,E}. Nevertheless Theorem~\ref{equid} (especially the error term) suffices for our purpose to prove the topological rigidity in $G/\Gamma$ ($k\geq2$).
\end{remark}

We will need the following theorem in the proof of Theorem \ref{equid}

\begin{theorem}[Kleinbock and Margulis \cite{KM99}]\label{bthm21}
There exist $\kappa>0$ and $l\in\mathbb N$ such that for any $f,g\in C^\infty (G/\Gamma)$ with $\|f\|_{\infty,l},\;\|g\|_{\infty,l}<\infty$, we have $$\left|(a(t)\cdot f,g)-\int_{G/\Gamma}f\int_{G/\Gamma}g\right|\ll e^{-\kappa t}\|f\|_{\infty,l}\|g\|_{\infty,l}$$ $$\left|(u(t)\cdot f,g)-\int_{G/\Gamma}f\int_{G/\Gamma}g\right|\ll (1+|t|)^{-\kappa}\|f\|_{\infty,1}\|g\|_{\infty,1}.$$ Here $(a(t)\cdot f)(x)=f(a(t)\cdot x)$ is the left translation of $f$ by $a(t)$ and $(u(t)\cdot f)(x)=f(u(t)\cdot x)$ is the left translation of $f$ by $u(t)$.
\end{theorem}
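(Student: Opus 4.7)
The plan is to combine Margulis's thickening trick with the exponential mixing of $\{a(t)\}$ from Theorem \ref{bthm21}. Write $t_0 = (\ln T)/\alpha$ and $y = a(-t_0)\cdot x$, so that $\eta = \eta(y)$, and use the intertwining $a(t_0)u(r) = u(Tr)a(t_0)$ together with the substitution $s = Tr$ to recast the time average as
$$\frac{1}{T}\int_0^T f(u(s)x)\,ds = \int_0^1 (a(t_0)\cdot f)(u(r)y)\,dr.$$
The task is to compare this integral to $\int f\,d\mu$.

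Let $\mathfrak p\subset\mathfrak g$ be the direct sum of the central and stable root spaces of the first $\SL_2$ factor together with the full Lie algebras of the remaining $k-1$ factors, so that $\mathfrak g = \mathfrak u\oplus\mathfrak p$ and all eigenvalues of $\Ad(a(t))$ on $\mathfrak p$ are $\le 1$ for $t\ge 0$. Set $P=\exp(\mathfrak p)$ and write $B_\epsilon^P$ for the $\epsilon$-ball in $P$ at the identity. For $\epsilon$ small compared to $\eta$, I will introduce a smooth nonnegative bump $\psi_\epsilon$ on $G/\Gamma$ concentrated in a tubular neighborhood of $P$-radius $\epsilon$ around the segment $U_1\cdot y$, with $\int\psi_\epsilon\,d\mu\asymp\epsilon^{\dim\mathfrak p}$ and $\|\psi_\epsilon\|_{\infty,l}\ll\epsilon^{-N_l}$ for a constant $N_l$ depending on $l$ and $\dim G$. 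Concretely, $\psi_\epsilon$ is obtained by convolving the orbit measure on $U_1\cdot y$ with a transverse bump on $P$ at scale $\epsilon$, which ensures global smoothness on $G/\Gamma$ regardless of whether $U_1\cdot y$ is embedded.

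The key point, specific to the product $G=\prod\SL_2(\mathbb R)$, is that $\Ad(a(t_0))$ is non-expanding on $\mathfrak p$: for $v = \exp(\xi)\in B_\epsilon^P$ we have $a(t_0)va(-t_0) = \exp(\Ad(a(t_0))\xi)$ with $\|\Ad(a(t_0))\xi\|\le\|\xi\|\le\epsilon$, and hence
$$\bigl|(a(t_0)\cdot f)(vu(r)y) - (a(t_0)\cdot f)(u(r)y)\bigr| = \bigl|f((a(t_0)va(-t_0))\cdot a(t_0)u(r)y) - f(a(t_0)u(r)y)\bigr| \ll \epsilon\,\|f\|_{\infty,1}$$
uniformly for $(v,r)\in B_\epsilon^P\times[0,1]$. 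Averaging over $B_\epsilon^P$ and unfolding through $\psi_\epsilon$ gives
$$\int_0^1(a(t_0)\cdot f)(u(r)y)\,dr = \frac{(a(t_0)\cdot f,\psi_\epsilon)}{\int\psi_\epsilon\,d\mu} + O\bigl(\epsilon\,\|f\|_{\infty,1}\bigr).$$
Applying the exponential mixing estimate of Theorem \ref{bthm21} to the pairing $(a(t_0)\cdot f,\psi_\epsilon)$ and dividing by $\int\psi_\epsilon\,d\mu\asymp\epsilon^{\dim\mathfrak p}$ converts the main term into $\int f\,d\mu$ with an error $\ll T^{-\kappa/\alpha}\epsilon^{-N}\|f\|_{\infty,l}$, where $N = N_l+\dim\mathfrak p$.

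It remains to optimize the total error $\ll(\epsilon + T^{-\kappa/\alpha}\epsilon^{-N})\|f\|_{\infty,l}$ subject to the injectivity constraint $\epsilon\le c_0\eta$. If the unconstrained optimum $\epsilon_*\asymp T^{-\kappa/(\alpha(N+1))}$ satisfies $\epsilon_*\le c_0\eta$, take $\epsilon=\epsilon_*$ and get error $\ll T^{-a}\|f\|_{\infty,l}$ with $a=\kappa/(\alpha(N+1))$; otherwise take $\epsilon=c_0\eta$, and use $\eta<\epsilon_*$ to conclude that the bound is dominated by $T^{-\kappa/\alpha}\eta^{-N}\|f\|_{\infty,l}$. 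Both regimes are absorbed into the claimed form $T^{-a}\eta^{-b}\|f\|_{\infty,l}$ for suitable constants $a,b>0$ and sufficiently large $l$, depending only on $G/\Gamma$ and $U$. The main technical obstacle will be the careful construction of $\psi_\epsilon$ with the stated Sobolev control and the justification that the injectivity radius $\eta(y)$ is the only geometric quantity that enters the argument — in particular, that the possible wrapping of $U_1\cdot y$ in a cusp causes at worst a bounded-multiplicity overcount in $\int\psi_\epsilon\,d\mu$, leaving the ratio in the above identity essentially unchanged.
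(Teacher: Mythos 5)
The statement you were asked to prove is Theorem~\ref{bthm21}, the Kleinbock--Margulis mixing theorem: exponential decay of correlations for the one-parameter diagonal flow $\{a(t)\}$ and polynomial decay for $\{u(t)\}$ on $G/\Gamma$. In the paper this is a \emph{cited} result — it carries the attribution ``Kleinbock and Margulis \cite{KM99}'' and is used as a black box with no proof supplied. Your argument, however, \emph{invokes} that mixing estimate (``Applying the exponential mixing estimate of Theorem~\ref{bthm21} to the pairing $(a(t_0)\cdot f,\psi_\epsilon)$\ldots'') and produces as output a bound of the form $T^{-a}\eta^{-b}\|f\|_{\infty,l}$ on the deviation of $\frac1T\int_0^T f(u(s)x)\,ds$ from $\int f\,d\mu$. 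That is precisely Theorem~\ref{equid}, not Theorem~\ref{bthm21}. You have therefore proved the wrong statement: you derived effective equidistribution of horospherical orbits \emph{from} mixing, rather than establishing mixing itself. A genuine proof of Theorem~\ref{bthm21} would need to go through spectral gap / decay of matrix coefficients for unitary representations of $G=\prod_{i=1}^k\SL_2(\mathbb R)$ with an irreducible lattice (Howe--Moore plus the quantitative refinements of \cite{KM99}), none of which appears in your write-up.

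Since what you wrote is really an attempt at Theorem~\ref{equid}, a brief comment on it as such: your overall strategy (Margulis thickening plus mixing) matches the paper's, but you thicken the \emph{entire} length-one segment $\{u(r)\cdot y: 0\le r\le 1\}$ into a single tube $\psi_\epsilon$, whereas the paper first subdivides $[0,1]$ into subintervals of length $\gamma\sim\eta(y)$ (Lemma~\ref{bl31}) and thickens each short piece separately (Lemma~\ref{bl33}). This difference is not cosmetic. When you unfold $(a(t_0)\cdot f,\psi_\epsilon)/\int\psi_\epsilon\,d\mu$ into $\int_0^1(a(t_0)\cdot f)(u(r)y)\,dr + O(\epsilon\|f\|_{\infty,1})$, the identity requires control of the overlaps of the tube with itself. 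The orbit can self-approach on scale $\eta(y)$, so the local multiplicity $m(r)$ of the tube can be as large as $\asymp 1/\eta(y)$ and, more importantly, can vary along the orbit; the unfolding then yields the \emph{weighted} average $\int m(r)(a(t_0)f)(u(r)y)\,dr/\int m(r)\,dr$ rather than the unweighted one you need. You flag this as ``at worst a bounded-multiplicity overcount, leaving the ratio essentially unchanged,'' but that is an assertion, not an argument, and it is exactly the point where the paper's device of cutting $[0,1]$ into pieces shorter than the injectivity radius is doing real work: on each such piece the tube is embedded, the unfolding is exact, and the cost is an explicit factor of $1/\gamma\sim 1/\eta$ from the number of pieces, which is then absorbed into the exponent $b$.
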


\begin{lemma}\label{bl31}
Let $x\in G/\Gamma$ and $I(1)=\{u(s):0\leq s\leq 1\}$. Then for any $y\in I(1)\cdot x$, we have $\eta(y)\sim\eta(x).$ Here the implicit constant depends only on $G$ and $U$.
\end{lemma}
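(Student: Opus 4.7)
\bigskip

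\noindent\textbf{Proof proposal.} The plan is to reduce the comparison $\eta(y)\sim\eta(x)$ to a comparison of the lengths of the shortest non-trivial elements of the stabilizer subgroups, and then exploit the fact that conjugation by elements of the compact set $I(1)$ distorts lengths near the identity by at most a fixed multiplicative constant.

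First, I would record a general characterization of the injectivity radius in terms of the stabilizer $\Stab(z)=\{g\in G: g\cdot z=z\}$. Setting $\delta(z):=\inf_{g\in\Stab(z)\setminus\{e\}}d(e,g)$, left-invariance of $d$ yields
$$\tfrac{1}{2}\delta(z)\leq\eta(z)\leq\delta(z).$$
The upper bound follows by taking any $g\in\Stab(z)\setminus\{e\}$ and noting that $e,g\in B_{d(e,g)+\epsilon}$ both map to $z$; the lower bound follows from the fact that if $g_1,g_2\in B_\eta$ with $g_1\cdot z=g_2\cdot z$, then $g_2^{-1}g_1\in\Stab(z)$ and $d(e,g_2^{-1}g_1)\leq d(e,g_1)+d(e,g_2)<2\eta$.

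Second, for $y=u(s)\cdot x$ with $s\in[0,1]$ we have the identity $\Stab(y)=u(s)\Stab(x)u(-s)$, so comparing $\delta(y)$ with $\delta(x)$ reduces to controlling the conjugation map $h\mapsto u(s)hu(-s)$ near $e$. Since $I(1)$ is a compact subset of $G$, the quantity $\sup_{s\in[0,1]}\max\bigl(\|\Ad(u(s))\|,\|\Ad(u(-s))\|\bigr)$ is finite, and transferring from $\mathfrak g$ to $G$ via $\exp$ yields constants $C,r_0>0$ depending only on $G$ and $U$ such that
$$C^{-1}d(e,h)\;\leq\;d\bigl(e,u(s)hu(-s)\bigr)\;\leq\;C\,d(e,h)\qquad\text{for all }s\in[0,1],\ h\in B_{r_0}.$$

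Combining these two ingredients, if $\delta(x)<r_0$, any near-minimal $h_0\in\Stab(x)\setminus\{e\}$ produces $u(s)h_0u(-s)\in\Stab(y)\setminus\{e\}$ with comparable length, giving $\delta(y)\leq C\delta(x)$; the reverse inequality $\delta(x)\leq C\delta(y)$ follows symmetrically by running the same argument with $x=u(-s)\cdot y$. In the remaining case $\delta(x)\geq r_0$, both $\eta(x)\geq r_0/2$ and (by the symmetric argument applied to elements of $\Stab(y)$ of length at most $r_0/C$) $\eta(y)\gtrsim r_0$ are bounded below by universal constants. In either case $\eta(y)\sim\delta(y)\sim\delta(x)\sim\eta(x)$ with implicit constants depending only on $G$ and $U$, as required. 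The only mild obstacle is calibrating the threshold $r_0$ so that the bi-Lipschitz conjugation estimate applies to every element of the stabilizer whose length is comparable to $\delta(x)$; this is handled by the dichotomy above.
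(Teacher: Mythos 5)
Your proof is correct, and the underlying mechanism — that conjugation by elements of the compact set $I(1)$ distorts lengths and balls near the identity by at most a fixed multiplicative constant — is exactly what the paper exploits. The route, however, is genuinely different. The paper works directly with the defining property of $\eta(\cdot)$: since $y=u\cdot x$, injectivity of $B_{\eta(x)}\to B_{\eta(x)}\cdot x$ transfers immediately to injectivity of $u\,B_{\eta(x)}\,u^{-1}\to u\,B_{\eta(x)}\,u^{-1}\cdot y$, and because $u\,B_{\eta(x)}\,u^{-1}$ contains a ball $B_{c\eta(x)}$ with $c$ depending only on $I(1)$, one reads off $\eta(y)\geq c\,\eta(x)$ in one line, with the reverse inequality by symmetry. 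You instead interpose the two-sided comparison $\tfrac12\delta(z)\leq\eta(z)\leq\delta(z)$ between the injectivity radius and the shortest nontrivial stabilizer element, replace the statement about balls with the identity $\Stab(y)=u(s)\Stab(x)u(-s)$, and then apply a pointwise bi-Lipschitz estimate for conjugation on $B_{r_0}$. This is longer but buys something: the stabilizer characterization makes the constants fully explicit and is a reusable fact in its own right (it clarifies, for instance, why $\eta$ is uniformly bounded above on $G/\Gamma$, a point the paper also invokes later in the proof of Lemma~\ref{bl33}). One minor caveat on which your argument silently relies: the equality $d(e,g_2^{-1}g_1)=d(g_2,g_1)$ used for the lower bound $\eta\geq\delta/2$ requires $d$ to be left-invariant, which is the intended reading of ``the induced distance on $G$'' but is worth stating. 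Your case split at the threshold $r_0$ (handling the possibility that the shortest stabilizer element lies outside the bi-Lipschitz range) is correctly resolved and is a detail the paper's shorter argument does not need to confront.
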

\begin{proof}
Let $y=u\cdot x$ for some $u\in I(1)$. By definition, the map $B_{\eta(x)}\to B_{\eta(x)}\cdot x$ is injective. This implies that the map $u\cdot B_{\eta(x)}\cdot u^{-1}\to u\cdot B_{\eta(x)}\cdot u^{-1}\cdot y$ is injective and hence $\eta(x)\ll\eta(y)$ for some constant depending only on $I(1)$. The proof of $\eta(y)\ll\eta(x)$ is similiar with the relation $x\in I(1)^{-1}\cdot y$. This completes the proof of the lemma.
\end{proof}

Now we fix a non-negative compactly supported smooth function $g(x)$ on $\mathbb R$ with integral one, and for any $n\in\mathbb N$, $\gamma>0$ and $\delta>0$ define
\begin{eqnarray*}
g_{\delta,n,\gamma}(u)=\frac1{\delta^n}\int_0^\gamma\int_0^\gamma\cdots\int_0^\gamma g\left(\frac{u_1-t_1}\delta\right) g\left(\frac{u_2-t_2}\delta\right)\dots g\left(\frac{u_n-t_n}\delta\right)dt_1dt_2\dots dt_n.
\end{eqnarray*}
It is clear that the function $g_{\delta,n,\gamma}$ satisfies the following properties:
\begin{enumerate}
\item $\int_{\mathbb R^n}g_{\delta,n,\gamma}(u)du=\gamma^n$.
\item $g_{\delta,n,\gamma}(u)$ is supported around the box $[0,\gamma]\times[0,\gamma]\times\dots\times[0,\gamma]$.
\item $\int_{\mathbb R^n} |g_{\delta,n,\gamma}(u)-\chi_{[0,\gamma]^n}(u)|du\ll\delta(\gamma+\delta)^{n-1}$.
\end{enumerate}
Here the implicit constant depends only on $g(x)$.

\begin{lemma}\label{bl33}
Let $y\in G/\Gamma$. Then there exist constants $a,b>0$ and $l\in\mathbb N$ such that for any $t>0$, $\gamma<\frac{\eta(y)}2$ and and $f\in C^\infty(G/\Gamma)$ with $\|f\|_{\infty,l}<\infty$ and $\int_{G/\Gamma} f d\mu=0$, we have
\begin{eqnarray*}
\left|\int_0^\gamma f(a(t)\cdot u(s)\cdot y)ds\right|\ll\frac1{e^{at}\gamma^b}\|f\|_{\infty,l}.
\end{eqnarray*}
The implicit constant depends only on $G/\Gamma$ and $U$.
\end{lemma}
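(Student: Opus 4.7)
My plan is to apply the thickening technique of Margulis, converting the orbit integral $I(t):=\int_0^\gamma f(a(t)u(s)y)\,ds$ into a pairing against a smooth test function on $G/\Gamma$, and then applying the exponential mixing of $\{a(t)\}$ in Theorem~\ref{bthm21}. Write $\mathfrak g=\mathfrak u\oplus\mathfrak w$ where $\mathfrak w$ is the direct sum of the zero and negative weight spaces of $\Ad(a(1))$ (the neutral-plus-stable complement to $\mathfrak u$). Fix a smooth non-negative bump $\rho$ on $\mathfrak w$ with $\int\rho=1$ and small compact support, and set $\rho_\delta(w):=\delta^{-\dim\mathfrak w}\rho(w/\delta)$. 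Combined with the already-defined $g_{\delta,1,\gamma}$, define a smooth test function $\phi$ on $G$ by $\phi(u(s)\exp(w)):=g_{\delta,1,\gamma}(s)\,\rho_\delta(w)$ for $(s,w)\in\mathbb R\times\mathfrak w$. Provided $\delta$ is chosen small enough (say $\delta<\eta(y)/4$), the hypothesis $\gamma<\eta(y)/2$ ensures $\mathrm{supp}(\phi)$ is contained in a ball of radius less than $\eta(y)$, and consequently $g\mapsto gy$ is injective on $\mathrm{supp}(\phi)$. Hence we obtain $\phi^y\in C^\infty(G/\Gamma)$ supported near the arc $\{u(s)y:0\le s\le\gamma\}$ with
$$\tilde I(t)\;:=\;\int_G\phi(g)\,f(a(t)gy)\,dg=\Vol(G/\Gamma)\cdot\int_{G/\Gamma}\phi^y(x)\,(a(t)\cdot f)(x)\,d\mu(x).$$

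Since $\int f\,d\mu=0$, Theorem~\ref{bthm21} applied to the right-hand side gives $|\tilde I(t)|\ll e^{-\kappa t}\,\|\phi^y\|_{\infty,l}\,\|f\|_{\infty,l}\ll e^{-\kappa t}\,\delta^{-N}\,\|f\|_{\infty,l}$, with $N$ depending only on $\dim G$ and $l$, because each derivative of $\rho_\delta$ or $g_{\delta,1,\gamma}$ contributes a factor $\delta^{-1}$ to the Sobolev norm. The difference $I(t)-\tilde I(t)$ splits into a contribution $O(\delta\|f\|_\infty)$ from replacing $\chi_{[0,\gamma]}$ by $g_{\delta,1,\gamma}$ (property~(3) of $g_{\delta,1,\gamma}$) and a contribution bounded by $\sup_{|w|\le\delta,\,s\in[0,\gamma]}\|\Ad(a(t)u(s))w\|\cdot\|f\|_{\infty,1}$ from the $\mathfrak w$-thickening. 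A direct computation, using $\Ad(u(s))\mathfrak w\subset\mathfrak w+\mathfrak u$ with polynomial growth in $s$ of degree at most~$2$ on the stable component of $\mathfrak w$, together with the expansion $\Ad(a(t))|_{\mathfrak u}=e^{\alpha t}\cdot\mathrm{id}$, yields $\|\Ad(a(t)u(s))w\|\le|w|(1+s^2 e^{\alpha t})\le\delta(1+\gamma^2 e^{\alpha t})$. Hence
$$|I(t)|\;\ll\;\delta(1+\gamma^2 e^{\alpha t})\,\|f\|_{\infty,1}+e^{-\kappa t}\,\delta^{-N}\,\|f\|_{\infty,l}.$$

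To finish, I would balance the two summands by setting $\delta=\gamma^c\,e^{-\beta t}$ for appropriate $c,\beta>0$, with Theorem~\ref{bthm21} applied at a Sobolev degree $l$ large enough that $\kappa$ dominates the exponent $N\beta$ produced by differentiating $\phi^y$; both summands then take the form $\tfrac{1}{e^{at}\gamma^b}\|f\|_{\infty,l}$ for suitable constants $a,b>0$ depending only on $G/\Gamma$ and $U$, which is the desired bound. The main technical obstacle is the factor $1+\gamma^2 e^{\alpha t}$ in the thickening error: it arises from the degree-$2$ polynomial growth of $\Ad(u(s))$ sending the stable part of $\mathfrak w$ into $\mathfrak u$, which is then amplified by the $e^{\alpha t}$-expansion of $\Ad(a(t))$ on $\mathfrak u$. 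This forces $\delta$ to shrink exponentially in $t$, and the only reason the Sobolev-norm loss $\delta^{-N}$ does not defeat the strategy is the exponential mixing gain $e^{-\kappa t}$ of Theorem~\ref{bthm21}, which is available with the Sobolev degree $l$ as large as required.
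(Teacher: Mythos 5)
Your high-level plan (thicken the orbit segment, fold against the mixing estimate, balance) is the paper's plan, but you place the thickening on the wrong side of $u(s)$, and that choice creates an exponential blow-up your balancing step cannot absorb. You set $\phi(u(s)\exp(w))=g_{\delta,1,\gamma}(s)\rho_\delta(w)$, so $\tilde I(t)$ samples $f$ at $a(t)u(s)\exp(w)y$; the displacement from the orbit point $a(t)u(s)y$ is $\exp(\Ad(a(t)u(s))w)$. You compute this correctly: $\Ad(u(s))$ pushes the stable part of $\mathfrak w$ into $\mathfrak u$ with a coefficient of order $s^2$, and $\Ad(a(t))$ then multiplies that $\mathfrak u$-component by $e^{\alpha t}$, giving an error of size $\delta\gamma^2 e^{\alpha t}$. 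To kill this term you must take $\delta\ll e^{-\beta t}$ with $\beta>\alpha$; but then the mixing bound contributes $e^{-\kappa t}\delta^{-N}\gg e^{(N\beta-\kappa)t}$, which decays only if $\kappa>N\beta>N\alpha$. This is a hard numerical constraint on the fixed spectral-gap constant $\kappa$ from Theorem~\ref{bthm21}, and there is no reason it should hold; $\kappa$ is typically small while $\alpha$ and $N\ge 1$ are fixed. Your closing remark that ``the Sobolev degree $l$ as large as required'' resolves the tension is not correct: in Theorem~\ref{bthm21} both $\kappa$ and $l$ are produced once and for all, and enlarging $l$ does not enlarge $\kappa$. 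As written, the proposal does not yield any positive exponent $a$.

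The paper avoids this by inserting the transverse thickening \emph{between} $a(t)$ and $u(s)$: it compares $f(a(t)\cdot\exp(vY)\cdot\exp(z\cdot X)\cdot u(s)\cdot y)$ with $f(a(t)\cdot u(s)\cdot y)$, so the displacement is $a(t)\exp(vY)\exp(z\cdot X)a(-t)=\exp(e^{-\alpha t}vY)\exp(z\cdot X)$, and $\Ad(u(s))$ never acts on the thickening. Since $a(t)$ contracts $\mathfrak u^-$ by $e^{-\alpha t}$ and fixes $\mathfrak z$, this not only removes the $e^{\alpha t}$-blow-up but also lets the paper afford a stable-direction thickening of full width $\gamma$ (via $g_{\delta,n,\gamma}(v)$) while keeping only $\delta$ in the neutral direction (via $g_{\delta,m,\delta}(z)$). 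The resulting geometric error is $\ll\gamma\max\{\delta,\gamma e^{-\alpha t}\}$, with no growing exponential, and it suffices to take $\delta=\gamma e^{-\epsilon t}$ with $\epsilon>0$ as small as needed so that $\epsilon$ times the Sobolev loss exponent stays below $\kappa$; this constraint can always be met, which is why the paper's proof closes and yours does not.
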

\begin{proof}
Let $U^-$ be the stable horospherical subgroup of $\{a(t)\}_{t\in\mathbb R}$ and $Z=Z(\{a(t)\}_{t\in\mathbb R})$ the centralizer of $\{a(t)\}_{t\in\mathbb R}$ in $G$. Denote by $\mathfrak u^-$ and $\mathfrak z$ the Lie algebra of $U^-$ and $Z$ respectively. Then we have $\mathfrak u\oplus\mathfrak z\oplus\mathfrak u^{-}=\mathfrak g.$ Let $n=\dim U=\dim U^-=1$, $m=\dim Z$ and $l\in\mathbb N$ as in Theorem~\ref{bthm21}. Let $\{X_1,X_2,\cdots,X_m\}$ be a basis of $\mathfrak z$ and $\mathfrak u^-=\mathbb R\cdot Y$ for some $Y\in\mathfrak g$. By the argument in \cite[Lemma 9.5]{V}, for any $\gamma<\eta(y)/2$, we have
\begin{eqnarray*}
&{}&\int_0^\gamma f(a(t)\cdot u(s)\cdot y)ds\\
&=&\int_{\mathbb R} f(a(t)\cdot u(s)\cdot y)g_{\delta,n,\gamma}(s)ds+O(\|f\|_{\infty,l})\delta(\gamma+\delta)^{n-1}\\
&=&\frac1{\delta^m\gamma^n}\iiint_{\mathbb R\times\mathbb R^m\times\mathbb R}f(a(t)\cdot u(s)\cdot y)g_{\delta,n,\gamma}(u)g_{\delta,m,\delta}(z)g_{\delta,n,\gamma}(v)dsdzdv+O(\|f\|_{\infty,l})\delta(\gamma+\delta)^{n-1}\\
&=&\frac1{\delta^m\gamma^n}\iiint_{\mathbb R\times \mathbb R^m\times\mathbb R}f(a(t)\cdot\exp(v\cdot Y)\cdot\exp(z_1X_1+\cdots z_m\cdot X_m)\cdot u(s)\cdot y)\\
&{}&\hspace{0.5cm}g_{\delta,n,\gamma}(u)g_{\delta,m,\delta}(z)g_{\delta,n,\gamma}(v)dudzdv+O(\|f\|_{\infty,l})(\delta(\gamma+\delta)^{n-1}+\gamma^n\max\{\delta,\gamma/e^{\alpha t}\})\\
&=&\frac1{\delta^m\gamma^n}\int_{G/\Gamma} f(a(t)\cdot x)g_{\delta,y}(x)d\mu(x)+O(\|f\|_{\infty,l})(\delta(\gamma+\delta)^{n-1}+\gamma^n\max\{\delta,\gamma/e^{\alpha t}\})\\
&=&\frac1{\delta^m\gamma^n}(a(t)\cdot f,g_{\delta,y})+O(\|f\|_{\infty,l})(\delta(\gamma+\delta)^{n-1}+\gamma^n\max\{\delta,\gamma/e^{\alpha t}\})
\end{eqnarray*}
where $g_{\delta,y}$ is a function supported on the ball of radius $\eta(y)$ at $y$ in $G/\Gamma$. Note that the injectivity radius function $\eta(y)$ is bounded above by a constant depending only on $G/\Gamma$. By the definition of Lie derivatives, we can compute $\|g_{\delta,y}\|_{\infty,l}$ and there exists a constant $p>0$ such that $\|g_{\delta,y}\|_{\infty,l}\ll1/\delta^p.$ Therefore, by Theorem~\ref{bthm21}, we have
\begin{eqnarray*}
\left|\int_0^\gamma f(a(t)\cdot u(s)\cdot y)ds\right|\ll\frac1{\delta^m\gamma^n}\frac1{e^{\kappa t}\delta^p}\|f\|_{\infty,l}+(\delta(\gamma+\delta)^{n-1}+\gamma^n\max\{\delta,\gamma/e^{q t}\})\|f\|_{\infty,l}.
\end{eqnarray*}
Let $\delta=\gamma e^{-\epsilon t}<\gamma$ for some small $\epsilon>0$ and this completes the proof of the lemma.
\end{proof}

\begin{proof}[Proof of Theorem \ref{equid}]
Without loss of generality, assume that $\int fd\mu=0$. By Lemma \ref{bl31}, we can find $\gamma>0$ with the following properties
\begin{enumerate}
\item the interval $I(1)$ can be divided into small subintervals $\{B_j\}$ such that for each $j$, there exists $y_j\in I(1)$ such that $B_j=I(\gamma)\cdot y_j$.
\item For each $j$, we have $\gamma<\eta(y_j\cdot a(-\ln T/\alpha)\cdot x)/2$.
\item $\gamma\sim\eta(a(-\ln T/\alpha)\cdot x)$ and the implicit constant depends only on $G/\Gamma$ and $U$.
\end{enumerate}
In fact, we can pick such $\gamma$ by first taking the infimum of $\{\eta(y\cdot a(-\ln T/\alpha)\cdot x)/2|y\in I(1)\}$ and then modifying it so that $1/\gamma$ is an integer. Note that the number of the intervals $B_j$ is $1/\gamma$. Now by Lemma \ref{bl33} we have

\begin{eqnarray*}
&{}&\left|\frac1{T}\int_0^Tf(u(s)\cdot x)ds\right|=\left|\int_0^1f(a(\ln T/\alpha)\cdot u(s)\cdot a(-\ln T/\alpha)\cdot x)ds\right|\\
&\leq&\sum_j\left|\int_{B_j}f(a(\ln T/\alpha)\cdot u\cdot a(-\ln T/\alpha)\cdot x)du\right|\\
&=&\sum_j\left|\int_0^\gamma f((a(\ln T/\alpha)\cdot u(s))\cdot(y_ja(-\ln T/\alpha)x))ds\right|\\
&\ll&\frac1{\gamma}\cdot\frac1{T^{a/\alpha}\gamma^b}\|f\|_{\infty,l}\ll\frac1{T^{a/\alpha}\eta(a(-\ln T/\alpha)\cdot x)^{b+1}}\|f\|_{\infty,l}.
\end{eqnarray*}
This completes the proof of Theorem \ref{equid}.
\end{proof}

The following theorem is crucial in the proofs of Theorems~\ref{mthm1} and~\ref{mthm2}, the proof of which is similar to \cite[Theorem 3.1]{V} and \cite[Theorem 1.2]{Z}. For self-containedness, we give a sketch of the proof.
\begin{theorem}\label{dequid}
Let $a,b>0$ and $l\in\mathbb N$ be as in Theorem~\ref{equid}. Let $T>K\geq 1$ and $f\in C^\infty(G/\Gamma)$ satisfying $\int_{G/\Gamma}fd\mu=0$ and $\|f\|_{\infty,l}<\infty$. Suppose that $q\in G/\Gamma$ satisfies $r=T^{a}\eta(a(-\ln T/\alpha)\cdot q)^b\geq1$. Then there exists $\beta>0$ such that $$\left|\frac1{T/K}\sum\limits_{0\leq Kj<T,j\in\mathbb N}f(u(Kj)\cdot q)\right|\ll\frac{K^\frac12}{r^\beta} \|f\|_{\infty,l}.$$ The implicit constant depends only on $G/\Gamma$ and $U$.
\end{theorem}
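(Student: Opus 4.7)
My plan is to derive Theorem~\ref{dequid} from two already-established inputs: the effective continuous equidistribution of Theorem~\ref{equid} and the exponential mixing estimate of Theorem~\ref{bthm21}, combined via a van der Corput style argument to produce square-root cancellation. This follows the general scheme of \cite[Theorem 3.1]{V} and \cite[Theorem 1.2]{Z}.

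First, I will renormalize using the conjugation relation $u(Kj)=a(t_0)u(j)a(-t_0)$ with $t_0=(\ln K)/\alpha$, so that
$$S:=\sum_{j=0}^{N-1}f(u(Kj)\cdot q)=\sum_{j=0}^{N-1}\tilde f(u(j)\cdot\tilde q),$$
where $\tilde f(x)=f(a(t_0)\cdot x)$, $\tilde q=a(-t_0)\cdot q$, and $N=\lfloor T/K\rfloor$. The key point is the identity $\eta(a(-\ln N/\alpha)\cdot\tilde q)=\eta(a(-\ln T/\alpha)\cdot q)$, which ensures that Theorem~\ref{equid} applied at $\tilde q$ with time $N$ interacts cleanly with the parameter $r$ up to a bookkeeping factor involving a power of $K$.

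Second, I will apply van der Corput's inequality with a cutoff $1\le H\le N$, which yields
$$|S|^2\ll\frac{N+H}{H}\left(\sum_{j=0}^{N-1}|\tilde f(u(j)\tilde q)|^2+2\,\mathrm{Re}\sum_{h=1}^{H-1}\sum_{j=0}^{N-h-1}F_h(u(j)\tilde q)\right),$$
where $F_h(x)=\tilde f(u(h)\cdot x)\overline{\tilde f(x)}$. The diagonal is bounded crudely by $N\|f\|_\infty^2$. For each off-diagonal inner sum I will (i) compare the discrete sum $\sum_j F_h(u(j)\tilde q)$ to the continuous integral $\int_0^N F_h(u(s)\tilde q)\,ds$ with explicit control on the pointwise discretization error; (ii) apply Theorem~\ref{equid} to $F_h-\int F_h\,d\mu$ to approximate this integral by $N\int F_h\,d\mu$; and (iii) bound $|\int F_h\,d\mu|\ll(1+h)^{-\kappa}\|\tilde f\|_{\infty,1}^2$ via the exponential mixing of $u(t)$ provided by Theorem~\ref{bthm21}.

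The final step is to optimize $H$ so that the three sources of error---the diagonal, the continuous equidistribution remainder, and the mixing tail---contribute comparably; the square root from $|S|^2\ll(\cdots)$ then produces the $K^{1/2}$ factor appearing in the statement. The main technical obstacle is the bookkeeping of Sobolev norms: the renormalized function $\tilde f$ satisfies $\|\tilde f\|_{\infty,l}\lesssim K^l\|f\|_{\infty,l}$ because $a(t_0)$ expands $\mathfrak u$ by a factor of $K$, and the products $F_h$ pick up an additional polynomial factor in $h$ since conjugation by the unipotent element $u(h)$ amplifies derivatives in directions transverse to $\mathfrak u$. Balancing these polynomial losses against the power gains from $r$ and from the mixing decay forces the exponent $\beta$ to be taken sufficiently small; the resulting $\beta>0$ will depend only on $a$, $b$, $l$, $\kappa$, and $\dim G$, as required.
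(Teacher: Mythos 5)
Your overall toolkit is right and does match the paper's intent: the paper's own proof is an explicit pointer to ``run the argument of \cite[\S5]{Z}'', which is the van der Corput / Cauchy--Schwarz scheme of \cite[Theorem~3.1]{V} and \cite[Theorem~1.2]{Z}, combined with the continuous effective equidistribution of Theorem~\ref{equid} and the polynomial decay of $u(t)$-correlations from Theorem~\ref{bthm21}. Your renormalization $u(Kj)=a(t_0)u(j)a(-t_0)$, the accompanying identity $\eta(a(-\ln N/\alpha)\tilde q)=\eta(a(-\ln T/\alpha)q)$, the use of $\langle u(h)\tilde f,\tilde f\rangle\ll(1+h)^{-\kappa}\|\tilde f\|_{\infty,1}^2$ for the off-diagonal (note this is \emph{polynomial}, not exponential, mixing of $u$), and the Sobolev bookkeeping $\|\tilde f\|_{\infty,l}\lesssim K^{l}\|f\|_{\infty,l}$, $\|F_h\|_{\infty,l}\lesssim(1+h)^{2l}\|\tilde f\|_{\infty,l}^2$ are all in the right place.

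The gap is in step (i), the replacement of the inner discrete sums $\sum_j F_h(u(j)\tilde q)$ by the integrals $\int_0^N F_h(u(s)\tilde q)\,ds$ ``with explicit control on the pointwise discretization error.'' After the renormalization, the $\mathfrak u$-directional derivative of $F_h$ has size $\sim K\|f\|_{\infty,1}^2$ (it is $K$-amplified by $\mathrm{Ad}(a(t_0))$, and this is \emph{not} reduced by conjugation with $u(h)$ since $U$ is abelian). So the best pointwise Riemann-sum estimate is
\[
\Bigl|\sum_{j<N}F_h(u(j)\tilde q)-\int_0^N F_h(u(s)\tilde q)\,ds\Bigr|\ \lesssim\ N\cdot K\,\|f\|_{\infty,1}^2,
\]
\emph{uniformly in $h$} and with no gain from $r$. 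Feeding this back through van der Corput, the discretization contribution to $|D|^2$ is
\[
\frac{1}{H}\sum_{1\le h<H}\frac{1}{N}\bigl(NK\|f\|_{\infty,1}^2\bigr)\ \sim\ K\,\|f\|_{\infty,1}^2,
\]
which does not decay in $r$ no matter how $H$ is chosen. So the final bound this route gives is only $|D|\ll K^{1/2}\|f\|_{\infty,l}$, which (since $K\geq 1$) is trivial and has no $r^{-\beta}$ factor. In other words, the pointwise discrete-to-continuous comparison is exactly as lossy as the trivial bound, and the whole $r^{-\beta}$ gain evaporates at this step.

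The missing idea, which is the heart of \cite[\S5]{Z} (following \cite[Theorem~3.1]{V}), is that one must \emph{never} compare a discrete correlation sum to a continuous integral pointwise. Instead one averages the entire sparse sum over a shift of one full period, introducing $D_s:=\frac{1}{N}\sum_j f(u(jK+s)q)$, $s\in[0,K]$. Then $\frac{1}{K}\int_0^K D_s\,ds$ is \emph{exactly} the continuous average $\frac{1}{T}\int_0^T f(u(t)q)\,dt$ (to which Theorem~\ref{equid} applies directly), and the second moment $\frac{1}{K}\int_0^K|D_s|^2\,ds$ expands into \emph{exact} continuous integrals $\frac{1}{T_h}\int_0^{T_h}F_{hK}(u(t)q)\,dt$ of the correlation functions, with no discretization error at all; to each of these one applies Theorem~\ref{equid} and Theorem~\ref{bthm21}, truncating the $h$-range where the Sobolev growth of $\|F_{hK}\|_{\infty,l}$ overwhelms $r$. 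This yields a genuine power saving in $r$ for $\frac{1}{K}\int_0^K|D_s|^2\,ds$. The last step is to recover the value at $s=0$: since $s\mapsto D_s$ is Lipschitz (with constant $\lesssim\|f\|_{\infty,1}$), a Chebyshev/continuity argument converts the $L^2([0,K])$ bound into a pointwise one, and it is precisely this conversion that produces the factor $K^{1/2}$ in the statement. Your step (i) needs to be replaced by this shift-averaging mechanism; as written, it does not yield $\beta>0$.
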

\begin{proof}[Sketch of the proof of Theorem~\ref{dequid}]
We mainly follow the proof of \cite[Theorem 1.2]{Z}, which is explained in \cite[\S5]{Z}. Note that here $r=T^{a}\eta(a(-\ln T/\alpha)\cdot q)^b\geq1$ and $r\ll T^a$ where the implicit constant depends only on $G/\Gamma$. Now we can run the argument in \cite[\S5]{Z} with 
$T\cdot e^{-\operatorname{dist}(g_{\log T}(q))}$ replaced by $T^{a}\eta(a(-\ln T/\alpha)\cdot q)^b$ and \cite[Theorem 2.2]{Z} replaced by Theorem~\ref{equid}, and conclude that there exists $\beta>0$ such that $$\left|\frac1{T/K}\sum\limits_{0\leq Kj<T,j\in\mathbb N}f(u(Kj)\cdot q)\right|\ll\left(\frac KT+\frac {K^{\frac12}}{r^\beta}\right)\|f\|_{\infty,l}.$$ Note that $K<T$ and $1\leq r\ll T^a$. This completes the proof of the theorem.
\end{proof}

\section{Proof of Theorem~\ref{mthm1} for points with non-divergent $a(-t)$-orbits}\label{sieve}
In this section, we prove Theorem~\ref{mthm1} in the case that the $a(-t)$-orbit of $p$ does not diverge. We will need the following Jurkat-Richert theorem about linear sieve. For any $x\in\mathbb R$, we denote by $[x]$ the largest integer $\leq x$. 
\begin{theorem}{\cite[Theorem 9.7]{MN}}\label{th31}
Let $A=\{a(n)\}_{n\in\mathbb N}$ be a sequence of non-negative numbers such that $$|A|=\sum_{n=1}^\infty a(n)<\infty.$$ Let $\mathcal P$ be a set of prime numbers and for $z\geq2$, let $$P(z)=\prod_{p\in\mathcal P,p<z}p.$$ Let $$S(A,\mathcal P,z)=\sum_{n=1,(n,P(z))=1}^\infty a(n).$$ For every $n\geq1$, let $g_n(d)$ be a multiplicative function such that $$0\leq g_n(p)<1,\textup{ for all }p\in\mathcal P.$$ Define $r(d)$ by $$|A_d|=\sum_{n=1,d|n}^\infty a(n)=\sum_{n=1}^\infty a(n)g_n(d)+r(d).$$ Let $\mathcal Q$ be a finite subset of $\mathcal P$, and let $Q$ be the product of the primes in $\mathcal Q$. Suppose that, for some $\epsilon$ satisfying $0<\epsilon<1/200$, the inequality $$\prod_{p\in\mathcal P\setminus\mathcal Q, u\leq p<z}(1-g_n(p))^{-1}<(1+\epsilon)\frac{\log z}{\log u}$$ holds for all $n$ and $1<u<z$. Then for any $D\geq z$ there is the upper bound $$S(A,\mathcal P, z)<(F_0(s)+\epsilon e^{14-s})X+R$$ and for any $D\geq z^2$ there is the lower bound $$S(A,\mathcal P,z)>(f_0(s)-\epsilon e^{14-s})X-R$$ for some functions $F_0(s)$ and $f_0(s)$ where $s=\frac{\log D}{\log z}$, $F_0(s)=1+O(e^{-s})$, $f_0(s)=1-O(e^{-s})$, $$X=\sum_{n=1}^\infty a(n)\prod_{p\in P(z)}(1-g_n(p))$$ and the remainder term is $$R=\sum_{d|P(z),d<DQ}|r(d)|.$$ If there is a multiplicative function $g(d)$ such that $g_n(d)=g(d)$ for all $n$, then $X=V(z)|A|$, where $$V(z)=\prod_{p|P(z)}(1-g(p)).$$
\end{theorem}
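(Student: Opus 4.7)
The plan is to prove Theorem~\ref{th31}, the Jurkat--Richert linear sieve, by the combinatorial (Rosser--Iwaniec) method. I would start from \emph{Buchstab's identity}
$$S(A,\mathcal{P},z)=S(A,\mathcal{P},z_1)-\sum_{\substack{z_1\leq p<z\\ p\in\mathcal{P}}}S(A_p,\mathcal{P},p),$$
which is valid for $z_1\leq z$. Iterating this identity peels off one prime at a time, and terminating the recursion once the product of the extracted primes exceeds the level $D$ produces an alternating sum whose even/odd truncations are respectively upper/lower bounds for $S(A,\mathcal{P},z)$. This combinatorial backbone of every linear sieve is what will eventually give rise to the functions $F_0$ and $f_0$ in the statement.

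The next step is to extract the main term. For each squarefree $d\mid P(z)$ appearing in the truncated expansion, I would write $|A_d|=\sum_n a(n)g_n(d)+r(d)$ and absorb the $r(d)$'s into $R=\sum_{d\mid P(z),\,d<DQ}|r(d)|$. The surviving main-term contributions reorganize into a product $\Lambda(D,z)\cdot X$ where
$$X=\sum_{n=1}^\infty a(n)\prod_{p\in P(z)}(1-g_n(p))$$
and $\Lambda(D,z)$ is a combinatorial weight that depends only on the truncation structure. After the substitution $s=\log D/\log z$, the discrete weight $\Lambda(D,z)$ converges to the Rosser--Iwaniec functions $F_0$ and $f_0$, which solve the delay-differential system
$$(sF_0(s))'=f_0(s-1),\qquad (sf_0(s))'=F_0(s-1)\qquad\textup{for large }s,$$
with the asymptotic behaviour $F_0(s)=1+O(e^{-s})$ and $f_0(s)=1-O(e^{-s})$.

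The density hypothesis $\prod_{u\leq p<z}(1-g_n(p))^{-1}<(1+\epsilon)\log z/\log u$ is the one-dimensional ($\kappa=1$) sieve condition $\Omega_1$, and a perturbation analysis would show that the multiplicative slack of $(1+\epsilon)$ in the density translates into an additive error of at most $\epsilon e^{14-s}$ on the Rosser--Iwaniec side; the explicit constant $14$ originates in Iwaniec's optimisation. The finite exceptional set $\mathcal{Q}$ is absorbed into the remainder via the extra factor $Q$ in the threshold $DQ$, at no cost to the main term.

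The main obstacle will be the bookkeeping: one must carefully verify that even/odd truncation levels indeed yield upper/lower bounds (an induction on recursion depth using only $|A_d|\geq 0$), and then show that the $\epsilon$-perturbation, once propagated through the iterated Buchstab expansion, does not accumulate beyond the single term $\epsilon e^{14-s}$. The asymmetry between the upper-bound threshold $D\geq z$ and the lower-bound threshold $D\geq z^2$ is intrinsic to the linear case and reflects the need for one additional Buchstab step to obtain a nontrivial lower bound, precisely the step where the delay-differential equation for $f_0$ is iterated once more than that for $F_0$.
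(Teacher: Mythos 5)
The paper does not prove Theorem~\ref{th31}; it is quoted (with notational adjustments) directly from Nathanson's \emph{Additive Number Theory} and used in \S\ref{sieve} as a black box, so there is no ``paper's own proof'' to compare against. Your sketch does correctly identify the engine of the argument Nathanson gives: iterate Buchstab's identity, truncate at sieve level $D$ with Rosser-type weights, push the $r(d)$'s into the remainder $R$, and recognize the surviving main term as governed by the sieve functions $F_0,f_0$ satisfying the delay-differential system you wrote down, with $\kappa=1$.

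As a proof outline, though, two of your steps are flattened to the point of being misleading. The truncation is not simply ``stop when the product of extracted primes exceeds $D$'': the Rosser weights impose parity-dependent constraints of the form $p_1\cdots p_{m-1}p_m^3<D$ along the decreasing chain $p_1>p_2>\cdots$, and the fact that the two truncations bracket $S(A,\mathcal P,z)$ requires a sign analysis of the discarded Buchstab remainders, not merely nonnegativity of $|A_d|$ together with induction on depth. Likewise, the main term does not factor as $\Lambda(D,z)\cdot X$: one must show that the weighted sum over retained $d$ lies between $f_0(s)X$ and $F_0(s)X$ up to an error $\epsilon e^{14-s}X$, and propagating the multiplicative slack $(1+\epsilon)$ of the one-dimensional density hypothesis through the recursion to land on exactly $e^{14-s}$ is where nearly all of the quantitative content of Jurkat--Richert lives. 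You name these as the ``main obstacle'' but do not remove them, so what you have is a summary, not a proof. Since the paper itself outsources this theorem to Nathanson and a fully detailed reproof would be chapter-length, the right move is to cite it as the paper does; the work that actually needs doing, and that \S\ref{sieve} does, is verifying the hypotheses of Theorem~\ref{th31} for the specific sequence $a(n)=f(u(n)\cdot p)$ by means of Theorem~\ref{dequid}.
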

\begin{remark}
The explicit expressions of the functions $F_0(s)$ and $f_0(s)$ can be found in \cite[Theorem 9.4]{MN}.
\end{remark}

The following theorem will be used to verify an assumption in Theorem~\ref{th31}.
\begin{theorem}{\cite[Theorem 6.9]{MN}}\label{th32}
For any $\epsilon>0$, there exists a number $\tilde u(\epsilon)>0$ such that $$\prod_{u\leq p<z}\left(1-\frac1p\right)^{-1}<(1+\epsilon/3)\frac{\log z}{\log u}$$ for any $\tilde u(\epsilon)\leq u<z$.
\end{theorem}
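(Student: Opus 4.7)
The plan is to deduce the claim directly from Mertens' third theorem, which supplies the asymptotic
\[
M(x):=\prod_{p<x}\Bigl(1-\frac{1}{p}\Bigr)^{-1}=e^{\gamma}\log x\cdot(1+\eta(x)),
\]
with $\eta(x)\to 0$ as $x\to\infty$ (and in fact $|\eta(x)|\ll 1/\log x$ in the quantitative form, with much sharper bounds available via the prime number theorem).

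First I would rewrite the product in the statement as a quotient,
\[
\prod_{u\leq p<z}\Bigl(1-\frac{1}{p}\Bigr)^{-1}=\frac{M(z)}{M(u)}=\frac{\log z}{\log u}\cdot\frac{1+\eta(z)}{1+\eta(u)}.
\]
Next, given $\epsilon>0$, I would choose $\tilde u(\epsilon)$ large enough that $|\eta(v)|\leq\epsilon/10$ for every $v\geq\tilde u(\epsilon)$; this is legitimate because $\eta(x)\to 0$, and is completely explicit given a quantitative Mertens (e.g.\ $\tilde u(\epsilon)=\exp(C/\epsilon)$ for an absolute constant $C$). For any $\tilde u(\epsilon)\leq u<z$, both $\eta(u)$ and $\eta(z)$ then satisfy this bound, so
\[
\frac{1+\eta(z)}{1+\eta(u)}\leq\frac{1+\epsilon/10}{1-\epsilon/10}<1+\frac{\epsilon}{3},
\]
which combined with the identity above gives the claimed inequality.

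The only point requiring a moment's thought is uniformity in $z$: one needs $|\eta(z)|$ to be small not merely for large $z$ but for \emph{every} $z>u$. This is automatic here, since the hypothesis $u\geq\tilde u(\epsilon)$ forces $z>\tilde u(\epsilon)$ as well. No genuine analytic obstacle arises; the content of the statement is essentially a rearrangement of Mertens' theorem, and \cite[Theorem~6.9]{MN} records it in precisely the form convenient for the linear-sieve application in Theorem~\ref{th31}.
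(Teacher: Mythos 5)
Your argument is correct, and it is worth noting that the paper does not prove this statement at all: it is quoted verbatim as \cite[Theorem 6.9]{MN}, so there is no in-paper proof to compare against. Your deduction from Mertens' third theorem, writing $\prod_{u\le p<z}(1-1/p)^{-1}=M(z)/M(u)$ and using $M(x)=e^{\gamma}\log x\,(1+\eta(x))$ with $\eta(x)\to 0$, is precisely the standard route, and it is essentially what Nathanson's treatment amounts to. One small point: the numeric step
\[
\frac{1+\epsilon/10}{1-\epsilon/10}<1+\frac{\epsilon}{3}
\]
holds for $\epsilon<4$ (and a fortiori breaks down once $\epsilon\ge 10$ where the denominator changes sign), so for fully arbitrary $\epsilon>0$ you should cap the error tolerance, e.g.\ choose $\tilde u(\epsilon)$ so that $|\eta(v)|\le\min(\epsilon,1)/10$ for $v\ge\tilde u(\epsilon)$; the cases $\epsilon\le 1$ and $\epsilon>1$ then go through immediately. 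In the only place the paper uses this theorem one has $\epsilon<1/200$, so this is purely cosmetic and your proof is sound.
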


\begin{proof}[Proof of Theorem~\ref{mthm1} assuming that $\{a(-t)\cdot p\}_{t\geq0}$ does not diverge]
The proof is similar to \cite[\S5]{Z24}. We prove that there exists $L\in\mathbb N$ such that the orbit $\{u(n)\cdot p: n\in\Omega(L)\}$ is dense in $G/\Gamma$. Since $\{a(-t)\cdot p\}_{t\geq0}$ returns to a compact subset $\mathcal K$ in $G/\Gamma$ infinitely often, there exists a divergent sequence of positive integers $T_i\to\infty$ such that $a(-\ln T_i/\alpha)\cdot p\in\mathcal K$.

Let $f\neq0$ be a non-negative compactly supported smooth function on $G/\Gamma$ and let $N=T_i$ for sufficiently large $i\in\mathbb N$. Define 
$$\begin{cases} a(n)=f(u(n)\cdot p) & 0\leq n\leq N \\ a(n)=0 & n>N\end{cases}.$$ We take the set $\mathcal P$ in Theorem~\ref{th31} to be the set of all prime numbers. Then using the notation in Theorem~\ref{th31} and the result in Theorem~\ref{dequid}, we have 
\begin{align*}
|A_d|=&\sum_{1\leq dj\leq N}f(u(jd)\cdot p)=\frac Nd\int_{G/\Gamma}fd\mu+O\left(\left(\frac Nd\right)\frac{d^\frac12}{r^{\beta}}\|f\|_{\infty,l}\right)+O\left(\|f\|_{\infty,l}\right)\\
=&\sum_{1\leq n\leq N}\frac1d a(n)+O\left(\left(\left(\frac Nd\right)\frac{d^\frac12}{r^{\beta}}+1\right)\|f\|_{\infty,l}\right).
\end{align*}
Note that here $r=T_i^{a}\cdot\eta(a(-\ln T_i/\alpha)\cdot p)^b\sim T_i^a=N^{a}$ where the implicit constant depends only on $\mathcal K$ and $b$. In Theorem~\ref{th31}, we take $g_n(d)=g(d)=1/d$ and $$|r(d)|\ll\left(\left(\frac Nd\right)\frac{d^\frac12}{r^{\beta}}+1\right)\|f\|_{\infty,l}.$$ 

Now set $z=N^\alpha$ for some $\alpha>0$ which we will determine later. Let $s>100$ be a sufficiently large number so that $f_0(s)>0.1$, where $f_0(s)$ is defined as in Theorem~\ref{th31}. Fix $\epsilon\in(0,1/200)$ and take $\tilde u(3\epsilon)>0$ as in Theorem~\ref{th32}. Let $\mathcal Q$ be the subset of primes $<\tilde u(3\epsilon)$. Then by Theorem~\ref{th31} (and with the help of Theorem~\ref{th32}), we have $$S(A,\mathcal P,z)>0.01V(z)|A|-R,$$ where $D=z^s=N^{\alpha s}$. By Theorem~\ref{th32} and Theorem~\ref{dequid} we can compute that $$|A|/N\sim \int_{G/\Gamma}fd\mu\textup{ and }V(z)\gg1/\log N\textup{ for }N=T_i \;(i\textup{ sufficiently large}).$$ If $\alpha$ is chosen to be sufficiently small, then $|R|\ll N^{1-\delta}$ for some $\delta>0$ and $S(A,\mathcal P,z)>0$ if $N=T_i$ is sufficiently large. Note that in the formula of $S(A,\mathcal P,z)$, any $n\leq N$ coprime to $P(z)$ has at most $[1/\alpha]+1$ prime factors. If we take $L=[1/\alpha]+1$ and $\Omega(L)$ the set of $L$-almost primes, then we obtain $$\sum_{n\in\Omega(L)} f(u(n)\cdot p)\geq S(A,\mathcal P,z)>0.$$ Since $f\neq0$ is any non-negative compactly supported smooth function, we conclude that the orbit $\{u(n)\cdot p:n\in\Omega(L)\}$ is dense in $G/\Gamma$. This completes the proof of Theorem~\ref{mthm1} when the orbit $\{a(-t)\cdot p\}_{t\geq0}$ does not diverge.
\end{proof}

\section{Proof of Theorem~\ref{mthm1} for points with divergent $a(-t)$-orbits}
Now we consider the points $p$ for which the orbit $\{a(-t)\cdot p\}_{t\in\mathbb R}$ diverges. In this case, $\Gamma$ is not a co-compact lattice in $G$, and we will show that the orbit $\{u(t)\cdot p\}_{t\in\mathbb R}$ is inside a $k$-dimensional torus.

By Margulis arithmeticity theorem~\cite{M84,M91}, the irreducible lattice $\Gamma$ is arithmetic ($k\geq 2$). Moreover, there exist a number field $k/\mathbb Q$ with completion $k_\infty=\mathbb R$ or $\mathbb C$, a connected simple subgroup $H$ of $\SL_n(k_\infty)$ defined over $k$, and a continuous surjection $$\phi:\prod_{\sigma\in S^\infty}H^\sigma\to G$$ with compact kernel such that $\phi(\Delta(H_{\mathcal O}))$ is commensurable with $\Gamma$, where $S_\infty=\{\sigma_1,\dots,\sigma_l\}$ $(l\in\mathbb N)$ is the set of all inequivalent embeddings of $k$ into $\mathbb C$, $\mathcal O$ is the ring of integers in $k$ and $\Delta(H_{\mathcal O})$ is the diagonal embedding of $\mathcal O$-points $H_{\mathcal O}$ of $H$ into $\prod_{\sigma\in S^\infty}H^\sigma$ by $S_\infty$ (see e.g. \cite[Corollary 5.5.15]{MDW}). Note that $G=\prod_{i=1}^k\SL_2(\mathbb R)$ and $G/\Gamma$ is not compact. This implies that $k_\infty=\mathbb R$ and $\prod_{\sigma\in S^\infty}H^\sigma$ is isogenous to $G$ without compact factors. So without loss of generality, we may assume that $G=\prod_{\sigma\in S^\infty}H^\sigma$, $\Gamma$ is a subgroup of finite-index in the arithmetic group $\Delta(H_{\mathcal O})$, and $\{u(t)\}_{t\in\mathbb R}$ and $\{a(t)\}_{t\in\mathbb R}$ are one-parameter subgroups in $H^{\sigma_1}$. We denote by $\pi_\sigma: G\to H^\sigma$ the projection from $G$ to $H^\sigma$ ($\forall\sigma\in S_\infty$), $\mathfrak h$ the Lie algebra of $H^{\sigma_1}$ and $\Stab(p)$ the stablizer of $p$ in $G$ ($\forall p\in G/\Gamma$).

The following lemma follows from \cite[Lemma 3.1]{BZ17}.
\begin{lemma}\label{l41}
Any connected nilpotent subgroup in $\SL_2(\mathbb R)$ containing a nontrivial unipotent element is a conjugate of $U_0$.
\end{lemma}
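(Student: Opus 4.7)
The plan is to reduce the group-level statement to a dimension count for nilpotent Lie subalgebras of $\mathfrak{sl}_2(\mathbb{R})$, and then to invoke the $\SL_2(\mathbb{R})$-conjugacy of nilpotent one-parameter subgroups. First I would pass from $N$ to its Lie algebra $\mathfrak{n}\subseteq\mathfrak{sl}_2(\mathbb{R})$, which is itself a nilpotent Lie subalgebra, and split into cases by $\dim\mathfrak{n}$.

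The cases $\dim\mathfrak{n}=3$ and $\dim\mathfrak{n}=2$ can be eliminated directly. In dimension $3$ we would have $\mathfrak{n}=\mathfrak{sl}_2(\mathbb{R})$, which is simple and hence not nilpotent. For $\dim\mathfrak{n}=2$, every $2$-dimensional subalgebra of $\mathfrak{sl}_2(\mathbb{R})$ is conjugate to the upper-triangular Borel subalgebra: no $2$-dimensional abelian subalgebra exists, because the centralizer of any nonzero element of $\mathfrak{sl}_2(\mathbb{R})$ is itself $1$-dimensional, and the Borel is solvable but not nilpotent (its lower central series stabilizes at the nilradical). Combined with $\mathfrak{n}\ne 0$, which is forced by the hypothesis that $N$ contains a nontrivial element, this leaves $\dim\mathfrak{n}=1$.

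Next I would write $\mathfrak{n}=\mathbb{R}\cdot X$ with $X\ne 0$. Because $X$ is traceless, its characteristic polynomial equals $\lambda^{2}+\det X$, so $X$ is either nilpotent (when $\det X=0$) or semisimple (when $\det X\ne 0$, with two distinct real or two complex conjugate eigenvalues). The hypothesis that $N=\exp(\mathbb{R}\cdot X)$ contains a nontrivial unipotent element rules out the semisimple case, since hyperbolic and elliptic one-parameter subgroups of $\SL_2(\mathbb{R})$ contain only $e$ as a unipotent element. Hence $X$ is nilpotent, and the standard fact that any two nonzero nilpotents in $\mathfrak{sl}_2(\mathbb{R})$ are $\SL_2(\mathbb{R})$-conjugate up to a nonzero real scalar produces $g\in\SL_2(\mathbb{R})$ with $gNg^{-1}=U_0$. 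I do not foresee any substantive obstacle; the whole argument is a direct application of the structure theory of $\mathfrak{sl}_2(\mathbb{R})$, and \cite[Lemma 3.1]{BZ17} presumably establishes essentially the same fact, perhaps in greater generality.
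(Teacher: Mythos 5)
Your argument is correct, and it gives a genuinely self-contained proof where the paper instead simply defers to a citation: the paper's entire ``proof'' is the single sentence that the lemma follows from Lemma~3.1 of the cited Buenger--Zheng reference \cite{BZ17} (a non-divergence paper for rank-one groups, whose Lemma~3.1 presumably records this classification in the rank-one setting). Your Lie-algebra case analysis is the natural direct route: dimension~$3$ is excluded because $\mathfrak{sl}_2(\mathbb R)$ is simple; dimension~$2$ is excluded because (as you compute) the centralizer of every nonzero element of $\mathfrak{sl}_2(\mathbb R)$ is one-dimensional, so a $2$-dimensional subalgebra must be non-abelian, and the unique $2$-dimensional non-abelian Lie algebra has trivial center and hence is not nilpotent; in dimension~$1$ the Jordan dichotomy (nilpotent versus semisimple, via $\lambda^2+\det X$) together with the observation that hyperbolic and elliptic one-parameter groups contain no nontrivial unipotent forces $X$ nilpotent, and conjugacy of the lines $\mathbb R X$ under $\SL_2(\mathbb R)$ finishes the proof.

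Two small remarks. First, the clause asserting that every $2$-dimensional subalgebra is conjugate to the Borel is stated but not proved in your write-up; it is true, but you do not need it --- once you know the $2$-dimensional subalgebra is non-abelian, the failure of nilpotency is immediate from the lower central series (or from the trivial center), independently of any conjugacy statement. Second, in the final step the relevant conjugacy invariant is the line $\mathbb R X$, not $X$ itself: under $\SL_2(\mathbb R)$ there are two orbits of nonzero nilpotents ($X_0$ and $-X_0$), but they span the same line, so the corresponding one-parameter subgroups coincide and the scalar ambiguity you flag is harmless. With these clarifications the proposal is a complete, elementary proof, which is arguably preferable to the paper's bare citation.
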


In the following, we will fix a sufficiently small constant $\rho_0>0$ such that the open ball $B_{\rho_0}$ of radius $\rho_0>0$ in $G$ satisfies the following properties
\begin{enumerate}
\item $B_{\rho_0}$ is a Zassenhaus neighborhood of $e$ in $G$, which is a neighborhood of $e$ such that for any discrete subgroup $\Delta\subset G$, there exists a connected nilpotent subgroup $F\subset G$ such that $\Delta\cap B_{\rho_0}\subset F$. Note that the existence of a Zassenhaus neighborhood is proved in e.g. \cite[Theorem 8.16]{R72}.
\item $B_{\rho_0}$ is a neighborhood of $e$ satisfying \cite[Corollary 11.18]{R72}.
\end{enumerate}

Since $\{a(-t)\cdot p\}_{t\geq0}$ diverges, by \cite[Theorem 1.12]{R72}, there exists $t_0>0$ such that for any $t\geq t_0$ one can find $\gamma_t\in\Stab(p)\setminus\{e\}$ such that $a(-t)\cdot\gamma_t\cdot a(t)\in B_{\rho_0}$ and $a(-t)\cdot \gamma_t\cdot a(t)\to e$ as $t\to\infty$. By the choice of $B_{\rho_0}$, $\gamma_t$ is unipotent. By the assumption on $\Gamma$, we can write $\gamma_t=(\gamma_t^\sigma)_{\sigma\in S_\infty}$ where each $\gamma_t^\sigma$ is unipotent in $H^\sigma$. Now let $F_t$ be a nilpotent subgroup in $G$ such that $$B_{\rho_0}\cap a(-t)\cdot\Stab(p)\cdot a(t)\subset F_t$$ and we may assume that $F_t=\prod_{\sigma\in S_\infty}\pi_{\sigma}(F_t)$. Since $\gamma_t^\sigma\in\pi_{\sigma}(F_t)$, by Lemma~\ref{l41} and isogeny, it implies that $\pi_{\sigma}(F_t)$ is a one-parameter unipotent subgroup in $H^\sigma$ $(\forall\sigma\in S_\infty)$, and $F_t$ is a $k$-dimensional abelian unipotent subgroup in $G$. Note that by the assumption on $\Gamma<\Delta(H_{\mathcal O})$, $F_t\cap a(-t)\cdot\Stab(p)\cdot a(t)$ is a lattice in $F_t$.

\begin{lemma}\label{l42}
Let $p\in G/\Gamma$ and suppose that $\{a(-t)\cdot p\}_{t\geq0}$ diverges. Then there exists a $k$-dimensional abelian unipotent subgroup $F\subset G$ such that $F\cap\Stab(p)$ is a lattice in $F$ and $U\subset F$.
\end{lemma}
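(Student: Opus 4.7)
My plan is to exhibit $F$ as the conjugate $F := a(t_*)\cdot F_{t_*}\cdot a(-t_*)$ for a suitably large $t_*\geq t_0$, where $F_{t_*}$ is the $k$-dimensional abelian unipotent subgroup from the paragraph immediately preceding the lemma. Since conjugation by $a(t_*)$ is a group automorphism, $F$ is automatically a $k$-dimensional abelian unipotent subgroup of $G$ with product decomposition $F=\prod_{\sigma\in S_\infty}\pi_\sigma(F)$, and the intersection
\[
F\cap\Stab(p)=a(t_*)\cdot\bigl(F_{t_*}\cap a(-t_*)\Stab(p)a(t_*)\bigr)\cdot a(-t_*)
\]
is a lattice in $F$ as the conjugate of a lattice in $F_{t_*}$. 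The entire content of the lemma therefore reduces to finding $t_*$ for which $U\subset F$.

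Using the product decomposition together with $\pi_\sigma(U)=\{e\}$ for $\sigma\neq\sigma_1$ and $\pi_{\sigma_1}(U)=U_0$, the inclusion $U\subset F$ collapses to $U_0\subset\pi_{\sigma_1}(F)$; since both are one-parameter unipotent subgroups of $\SL_2(\mathbb R)$, this is in turn equivalent to the equality $\pi_{\sigma_1}(F)=U_0$. Pushing the conjugation through and using that $\{a_1(t)\}$ normalizes its own unstable horospherical subgroup $U_0$, this reduces to the key claim
\[
\pi_{\sigma_1}(F_{t_*})=U_0\quad\text{for some sufficiently large }t_*\geq t_0.
\]

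I expect the proof of this key claim to be the main obstacle, and I would attack it by combining dynamics and arithmeticity. On the dynamical side, $\pi_{\sigma_1}(F_{t_*})$ is the unique one-parameter unipotent subgroup of $\SL_2(\mathbb R)$ containing the nonidentity unipotent $\pi_{\sigma_1}(\beta_{t_*})$ (nonidentity because no nontrivial unipotent of the irreducible arithmetic $\Gamma$ can lie in $\ker\pi_{\sigma_1}$, all factors of $G$ being non-compact). Decomposing $\log\gamma_t = X_t^+ + X_t^0 + X_t^-$ along $\mathfrak u\oplus\mathfrak z\oplus\mathfrak u^-$, on which $\Ad(a(t))$ acts with eigenvalues $e^{\alpha t},1,e^{-\alpha t}$ respectively, the hypothesis $a(-t)\gamma_t a(t)\to e$ forces $X_t^0\to 0$ and $e^{\alpha t}X_t^-\to 0$, so in particular the $\mathfrak z$- and $\mathfrak u^-$-components of $\log\gamma_t$ in the first factor decay to zero.

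On the arithmetic side, writing $\gamma_t = g\cdot\Delta(\eta_t)\cdot g^{-1}$ for some $\eta_t\in H(\mathcal O)$ (after absorbing the finite-index issue between $\Gamma$ and $\Delta(H_{\mathcal O})$), the hypothesis $\pi_\sigma(\gamma_t)\to e$ for all $\sigma\neq\sigma_1$ — which holds automatically since $a(t)$ acts trivially on $H^\sigma$ for such $\sigma$ — translates into $\sigma(\eta_t)\to e$ for all $\sigma\neq\sigma_1$. Consequently the algebraic-integer coordinates of $\eta_t$ corresponding to the $\mathfrak z$ and $\mathfrak u^-$ directions of the first factor are elements of $\mathcal O$ whose non-$\sigma_1$ conjugates tend to zero and whose $\sigma_1$-conjugate is controlled by the dynamical step. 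By the standard discreteness of $\mathcal O$ — only finitely many algebraic integers have all conjugates below any fixed bound — these coordinates must vanish identically for $t$ sufficiently large, which places $\pi_{\sigma_1}(\gamma_{t_*})$ inside a single fixed one-parameter unipotent subgroup of $\SL_2(\mathbb R)$. Comparing with the dynamical direction identifies this subgroup as $U_0$, so $\pi_{\sigma_1}(F_{t_*})=U_0$ and the lemma follows.
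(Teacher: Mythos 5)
Your first two paragraphs correctly reduce the lemma to the claim that $\pi_{\sigma_1}(F_{t_*})=U_0$ for some (equivalently, every large) $t_*$, and your dynamical observation that the $\mathfrak z$- and $\mathfrak u^-$-components of $\log\pi_{\sigma_1}(\gamma_t)$ tend to zero is also correct. The gap is in the arithmeticity paragraph: the coordinates of $\log\pi_{\sigma_1}(\gamma_t)$ (or of $\log\sigma_1(\eta_t)$) with respect to the $\mathfrak u\oplus\mathfrak z\oplus\mathfrak u^-$ splitting are \emph{not} elements of $\mathcal O$, nor even algebraic. That splitting is determined by the chosen real one-parameter group $\{a_1(t)\}$ and is further twisted by the base point $g$ (with $\gamma_t=g\,\Delta(\eta_t)\,g^{-1}$), neither of which is $k$-rational. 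So there is no collection of algebraic integers whose Galois conjugates you can bound, and the ``finitely many algebraic integers with all conjugates bounded'' principle has nothing to bite on. Put differently, the quantities the dynamics controls are real-analytic coordinate functions of the varying lattice elements $\gamma_t$; decay to zero of such a function gives no rigidity — you would need some a priori discreteness of its \emph{values}, which is exactly what is missing. (A secondary issue: your ``$\sigma_1$-conjugate is controlled'' is also not quite right, since the $\mathfrak u$-component of $\log\pi_{\sigma_1}(\gamma_t)$ can, and typically does, blow up like $e^{\alpha t}$, so $\sigma_1(\eta_t)$ is unbounded.)

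The paper closes this gap by a completely different, softer mechanism. It first shows that the assignment $t\mapsto F_t$ is locally constant: for each admissible abelian unipotent $F$, the set $I_F$ of times $t$ at which $a(-t)(F\cap\Stab(p))a(t)$ meets the Zassenhaus ball nontrivially is open, these sets are pairwise disjoint (uniqueness of the Zassenhaus nilpotent subgroup and of the one-parameter unipotent through a given nontrivial unipotent in each $\SL_2$ factor), and they cover $[t_0,\infty)$. Connectedness then forces a single $F_0$ with $a(-t)F_0\,a(t)=F_t$ for all $t\geq t_0$. With $F_0$ in hand, the inclusion $U\subset F_0$ is a purely linear-algebraic contradiction: if the generator $v$ of $\pi_{\sigma_1}(F_0)$ had a nonzero $\mathfrak u^-$-component, then $\|\Ad(a(-t))v\|\geq\|v\|$ for large $t$, so the discrete set $a(-t)(F_0\cap\Stab(p))a(t)$ stays uniformly bounded away from $e$, contradicting the divergence of $\{a(-t)\cdot p\}$. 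Note this avoids any explicit use of the $k$-structure at this stage; arithmeticity enters earlier only to guarantee that each $F_t$ meets $a(-t)\Stab(p)a(t)$ in a lattice and factors as a product over the places. If you want to keep your ``fix one $t_*$'' strategy, you would still need to prove that the conjugated $F_t$'s stabilize — which is exactly the connectedness argument — so there is no genuine shortcut here.
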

\begin{proof}
Let $\mathcal F$ be the collection of all $k$-dimensional abelian unipotent subgroups $F$ of $G$ such that
\begin{enumerate}
\item $F=\prod_{\sigma\in S_\infty}\pi_\sigma(F)$.
\item $F\cap\Stab(p)$ is a lattice in $F$.
\end{enumerate}
For any $F\in\mathcal F$, define $$I_F=\{t\in\mathbb R: a(-t)(F\cap\Stab(p))a(t)\cap B_{\rho_0}\neq\{e\},t\geq t_0\}.$$ Then by the discussion above, we have $$[t_0,+\infty)=\bigcup_{F\in\mathcal F} I_{F}.$$ We claim that the subsets $I_F$ are pairwise disjoint. Suppose that there exist $F_1,F_2\in\mathcal F$ such that $I_{F_1}\cap I_{F_2}\neq\emptyset.$ Let $t\in I_{F_1}\cap I_{F_2}$. Then we have $$a(-t)(F_1\cap\Stab(p))a(t)\cap B_{\rho_0}\neq\{e\},\quad a(-t)(F_2\cap\Stab(p))a(t)\cap B_{\rho_0}\neq\{e\}.$$ Let $\gamma\in a(-t)\cdot\Stab(p)\cdot a(t)\cap B_{\rho_0}\setminus\{e\}$. Then by the discussion above, $\gamma=(\gamma^\sigma)_{\sigma\in S_\infty}$ uniquely determines the unipotent subgroup $F_t$ such that $$a(-t)\cdot\Stab(p)\cdot a(t)\cap B_{\rho_0}\subset F_t.$$ This implies that $$a(-t)\cdot F_1\cdot a(t)=a(-t)\cdot F_2\cdot a(t)=F_t$$ and $F_1=F_2$.

Note that $I_F$ are relatively open subsets in $[t_0,\infty)$, and each $I_F$ is a disjoint union of intervals in $[t_0,\infty)$. By the connectedness of $[t_0,\infty)$, this implies that there is a subgroup $F_0\in\mathcal F$ such that $[t_0,\infty)=I_{F_0}$ and we have $$a(-t)\cdot F_0\cdot a(t)=F_t.$$  Now we claim that $U\subset F_0$. It suffices to prove that $\pi_{\sigma_1}(F_0)$ is the stable horospherical subgroup $U$ of $\{a(-t)\}_{t\in\mathbb R}$ in $H^{\sigma_1}$. Suppose that $\pi_{\sigma_1}(F_0)$ is not the stable horospherical subgroup of $\{a(-t)\}_{t\in\mathbb R}$ in $H^{\sigma_1}$. Let $v\in\mathfrak h$ be the nilpotent element such that $$\exp(\mathbb R\cdot v)=\pi_{\sigma_1}(F_0).$$ Note that $\mathfrak h$ is isomorphic to $\mathfrak{sl}_2(\mathbb R)$ and one can find a basis $\{X,Y,Z\}$ of $\mathfrak h$ such that $\exp(\mathbb R\cdot X)$ is the stable horospherical subgroup of $\{a(-t)\}_{t\in\mathbb R}$, $\exp(\mathbb R\cdot Y)$ is the unstable horospherical subgroup of $\{a(-t)\}_{t\in\mathbb R}$, $Z$ commutes with $\{a(t)\}_{t\in\mathbb R}$ and $[Z,X]=2X,\; [Z,Y]=-2Y,\; [X,Y]=Z$. Then we can write $v$ as $$v=aX+bY+cZ$$ where $b\neq 0$. One can compute that $$\Ad(a(-t))(v)=a\cdot e^{-\alpha t}\cdot X+b\cdot e^{\alpha t}\cdot Y+c\cdot Z$$ and there exists a sufficiently large number $t_1>0$ such that for any $t>t_1$ $$\|\Ad(a(-t))v\|\geq\|v\|.$$ This implies that for any $u\in\mathbb R\cdot v$, $$\|\Ad(a(-t))u\|\geq\|u\|.$$ From the inequalities above, one can deduce that for any $\gamma\in\Stab(p)\cap F_0\setminus\{e\}$ and any $t\geq t_1$, the element $\Ad(a(-t))\gamma$ is bounded away from the identity $e$. This contradicts the condition that $[t_0,+\infty)=I_{F_0}$ and $$\inf_{v\in a(-t)\cdot(\Stab(p)\cap F_0)\cdot a(t)\setminus\{e\}}d(v,e)\to0\quad(t\to\infty).$$ This completes the proof of the lemma.
\end{proof}

By Lemma~\ref{l42}, if $\{a(-t)\cdot p\}_{t\geq0}$ diverges, then there exists a $k$-dimensional abelian unipotent subgroup $F$ such that $F\cap\Stab(p)$ is a lattice in $F$ and $U\subset F$. Therefore, the orbit $U\cdot p$ is inside the compact unipotent $F$-orbit $F\cdot p$ which is a $k$-dimensional torus. This completes the proof of Theorem~\ref{mthm1}.

\section{Proof of Theorem~\ref{mthm2}}
In this section, we give a brief explanation about the proof of Theorem~\ref{mthm2}, which is similar to that of Theorem~\ref{mthm1}. In the following, for any $\gamma>0$, we define a map $$\phi(x)=a_{-\log x/2\alpha}\cdot u(x^{1+\gamma})\in G\quad(x>0).$$

\begin{lemma}\label{l51}
Let $p\in G/\Gamma$ and suppose that $\{\phi(t)\cdot p\}_{t>0}$ diverges. Then there exists a $k$-dimensional abelian unipotent subgroup $F\subset G$ such that $F\cap\Stab(p)$ is a lattice in $F$ and $U\subset F$.
\end{lemma}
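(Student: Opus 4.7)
My plan is to run the same argument as in Lemma~\ref{l42}, substituting the curve $\{\phi(t)\}_{t>0}$ for the one-parameter flow $\{a(-t)\}_{t\geq0}$ throughout. Two preliminary observations justify the substitution. First, both $a(s)$ and $u(s)$ lie in $H^{\sigma_1}\times\{e\}^{k-1}$, so $\phi(t)$ does too; hence conjugation by $\phi(t)$ preserves the product decomposition $G=\prod_\sigma H^\sigma$ and acts trivially on every factor $H^\sigma$ with $\sigma\neq\sigma_1$. Second, divergence of $\{\phi(t)\cdot p\}_{t>0}$ together with \cite[Theorem~1.12]{R72} yields $t_0>0$ such that for each $t\geq t_0$ there is a nontrivial $\gamma_t\in\Stab(p)$ with $\phi(t)\gamma_t\phi(t)^{-1}\in B_{\rho_0}$ and $\phi(t)\gamma_t\phi(t)^{-1}\to e$ as $t\to\infty$. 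Exactly as in Section~4, the Zassenhaus property of $B_{\rho_0}$, the arithmeticity of $\Gamma$, Lemma~\ref{l41} applied in each factor $H^\sigma$, and the isogeny between $G$ and $\prod_\sigma H^\sigma$, then produce a unique $k$-dimensional abelian unipotent subgroup $F_t=\prod_\sigma\pi_\sigma(F_t)\subset G$ with $\phi(t)\Stab(p)\phi(t)^{-1}\cap B_{\rho_0}\subset F_t$ and $F_t\cap(\phi(t)\Stab(p)\phi(t)^{-1})$ a lattice in $F_t$.

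I would then mimic Lemma~\ref{l42} verbatim. Let $\mathcal F$ be the collection of all $k$-dimensional abelian unipotent subgroups $F\subset G$ with $F=\prod_\sigma\pi_\sigma(F)$ and $F\cap\Stab(p)$ a lattice in $F$, and for each $F\in\mathcal F$ set
\[I_F=\{t\geq t_0:\phi(t)(F\cap\Stab(p))\phi(t)^{-1}\cap B_{\rho_0}\neq\{e\}\}.\]
The construction above gives $[t_0,\infty)=\bigcup_{F\in\mathcal F}I_F$. Since a single nontrivial element of $\phi(t)\Stab(p)\phi(t)^{-1}\cap B_{\rho_0}$ already determines $F_t$ (each $\pi_\sigma(F_t)$ being, by Lemma~\ref{l41}, the unique one-parameter unipotent subgroup of $H^\sigma$ containing the corresponding $\pi_\sigma$-projection), the $I_F$'s are pairwise disjoint. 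Continuity of $\phi$ makes each $I_F$ open in $[t_0,\infty)$, so connectedness of $[t_0,\infty)$ singles out a unique $F_0\in\mathcal F$ with $I_{F_0}=[t_0,\infty)$ and $\phi(t)F_0\phi(t)^{-1}=F_t$ for every $t\geq t_0$.

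To finish it suffices to show $\pi_{\sigma_1}(F_0)=U_0$, since $F_0=\prod_\sigma\pi_\sigma(F_0)$. Write $\pi_{\sigma_1}(\Lie F_0)=\mathbb R\cdot v_0$ with $v_0=aX+bY+cZ$ in the $\mathfrak{sl}_2$-basis of Lemma~\ref{l42}. A direct computation using $\Ad(\exp(sX))Y=Y+sZ-s^2X$, $\Ad(\exp(sX))Z=Z-2sX$, and the weights $e^{\alpha\tau},e^{-\alpha\tau},1$ of $\Ad(a(\tau))$ on $X,Y,Z$, with $s=t^{1+\gamma}$ and $\tau=-(\log t)/(2\alpha)$, shows that the $Y$-coefficient of $\Ad(\phi(t))v_0$ equals $bt^{1/2}$ and that its $X$-coefficient contains the term $-2ct^{1/2+\gamma}$. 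Hence, for $\gamma>0$, if $b\neq0$ the $Y$-component blows up, and if $b=0,c\neq0$ the $X$-component blows up; in particular $\|\Ad(\phi(t))v_0\|\to\infty$ whenever $(b,c)\neq(0,0)$. Consequently $\|\Ad(\phi(t))u\|\geq\|u\|$ for every $u\in\mathbb R\cdot v_0$ and every large $t$; combined with the fact that $\Ad(\phi(t))$ fixes $\bigoplus_{\sigma\neq\sigma_1}\pi_\sigma(\mathfrak g)$ pointwise, this yields $\|\Ad(\phi(t))\log\gamma\|\geq\|\log\gamma\|$ for every $\gamma\in F_0\cap\Stab(p)\setminus\{e\}$, uniformly bounded below by the length of the shortest nonzero vector of the lattice $\log(F_0\cap\Stab(p))$. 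This contradicts $\phi(t)\gamma_t\phi(t)^{-1}\to e$, so $b=c=0$ and $\pi_{\sigma_1}(F_0)=U_0$. The main obstacle is precisely this final computation: unlike $\Ad(a(-t))$, which is diagonal on $\mathfrak h$, the operator $\Ad(\phi(t))$ mixes the three weight spaces, and I must verify that the polynomial unipotent translation by $u(t^{1+\gamma})$ defeats the diagonal contraction along every direction outside $\mathbb R\cdot X$.
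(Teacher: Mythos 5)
Your proposal is correct and follows essentially the same route as the paper: transplant the $\mathcal{F}$/$I_F$ argument from Lemma~\ref{l42} with $\phi(t)$ in place of $a(-t)$, and then show that any candidate generator $v_0$ of $\pi_{\sigma_1}(\Lie F_0)$ other than a multiple of $X$ has $\|\Ad(\phi(t))v_0\|\to\infty$, contradicting $\phi(t)\gamma_t\phi(t)^{-1}\to e$. One small simplification you could make: since $v_0$ is nilpotent in $\mathfrak{sl}_2$ (so $c^2+ab=0$), the case $b=0,\,c\neq0$ cannot occur, and $\pi_{\sigma_1}(F_0)\neq U_0$ already forces $b\neq0$ — which is exactly why the paper only tracks the $Y$-coefficient $bt^{1/2}$.
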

\begin{proof}
Let $\mathcal F$ be the collection of all $k$-dimensional abelian unipotent subgroups $F$ of $G$ such that
\begin{enumerate}
\item $F=\prod_{\sigma\in S_\infty}\pi_\sigma(F)$.
\item $F\cap\Stab(p)$ is a lattice in $F$.
\end{enumerate}
For any $F\in\mathcal F$, define $$I_F=\{t\in\mathbb R: \phi(t)\cdot(F\cap\Stab(p))\cdot\phi(t)^{-1}\cap B_{\rho_0}\neq\{e\},t\geq t_0\}.$$ Using the same argument as in Lemma~\ref{l42}, one can deduce that there is a subgroup $F_0\in\mathcal F$ such that $[t_0,\infty)=I_{F_0}.$ Now we claim that $U\subset F_0$. It suffices to prove that $\pi_{\sigma_1}(F_0)$ is the stable horospherical subgroup $U$ of $\{a(-t)\}_{t\in\mathbb R}$ in $H^{\sigma_1}$. Suppose that $\pi_{\sigma_1}(F_0)$ is not the stable horospherical subgroup of $\{a(-t)\}_{t\in\mathbb R}$ in $H^{\sigma_1}$. Let $v\in\mathfrak h$ be the nilpotent element such that $\exp(\mathbb R\cdot v)=\pi_{\sigma_1}(F_0).$ Note that $\mathfrak h$ is isomorphic to $\mathfrak{sl}_2(\mathbb R)$ and one can find a basis $\{X,Y,Z\}$ of $\mathfrak h$ such that $\exp(\mathbb R\cdot X)$ is the stable horospherical subgroup of $\{a(-t)\}$, $\exp(\mathbb R\cdot Y)$ is the unstable horospherical subgroup of $\{a(-t)\}$, $Z$ commutes with $\{a(t)\}_{t\in\mathbb R}$ and $[Z,X]=2X,\; [Z,Y]=-2Y,\; [X,Y]=Z$. Then we can write $v$ as $$v=aX+bY+cZ$$ where $b\neq 0$. One can compute that in terms of the basis $\{X,Y,Z\}$, the $Y$-coefficient of the element $\Ad(\phi(t))(v)\in\mathfrak h$ is equal to $b\cdot t^{\frac12}$, and there exists a sufficiently large number $t_1>0$ such that for any $t>t_1$ $$\|\Ad(\phi(t))v\|\geq\|v\|.$$ This implies that for any $u\in\mathbb R\cdot v$, we have $$\|\Ad(\phi(t))u\|\geq\|u\|.$$ From the inequality above, one can deduce that for any $\gamma\in\Stab(p)\cap F_0\setminus\{e\}$ and any $t\geq t_1$, the element $\Ad(\phi(t))\gamma$ is bounded away from the identity $e$, which contradicts the condition that $[t_0,+\infty)=I_{F_0}$ and $$\inf_{v\in\phi(t)\cdot(\Stab(p)\cap F_0)\cdot\phi(t)^{-1}\setminus\{e\}}d(v,e)\to0\quad(t\to\infty).$$ This completes the proof of the lemma.
\end{proof}

\begin{proof}[Proof of Theorem~\ref{mthm2}]
We may assume that $f\in C^\infty(G/\Gamma)$ with $\|f\|_{\infty,l}<\infty$ and $\int_{G/\Gamma}fd\mu=0.$ We will find $\gamma_0>0$ such that for any $0<\gamma<\gamma_0$ Theorem~\ref{mthm2} holds. Note that by Taylor expansion, for any $M\in\mathbb N$ and $k\in\mathbb N,$ $$(M+k)^{1+\gamma}=M^{1+\gamma}+(1+\gamma)M^\gamma k+O(M^{\gamma-1}k^2).$$ Therefore, if $M$ is sufficiently large and $\gamma<1/2$, the sequence $$P_M=\left\{(M+k)^{1+\gamma}\Big|\;0\leq k\leq\frac1{1+\gamma} M^{\frac12-\gamma}\right\}$$ is approximated by the arithmetic progression $$\tilde P_M=\left\{M^{1+\gamma}+(1+\gamma)M^\gamma k\Big|\;0\leq k\leq\frac1{1+\gamma}M^{\frac12-\gamma}\right\}$$ since $$O(M^{\gamma-1}k^2)\leq O(M^{\gamma-1}(M^{\frac12-\gamma})^2)=O(M^{-\gamma})\to0\textup{ as }M\to\infty.$$

Suppose that $\{\phi(n)\cdot p\}_{n\in\mathbb N}$ does not diverge where $\phi(n)=a_{-\log n/2\alpha}\cdot u(n^{1+\gamma}).$ Then there exists a sequence of positive integers $M_i$ such that $\phi(M_i)\cdot p$ belongs to a compact subset $\mathcal K\subset G/\Gamma$. Let $T_i=M_i^{\frac12}$ and $q_i=u(M_i^{1+\gamma})\cdot p$. We compute that $a(-\ln T_i/\alpha)\cdot q_i=\phi(M_i)\cdot p\in\mathcal K$, which implies that $\eta(a(-\ln T_i/\alpha)\cdot q_i)$ is bounded below by a constant depending only on $\mathcal K$.

By Theorem \ref{dequid} with $T=T_i=M_i^{1/2}$, $K=(1+\gamma)M_i^\gamma$ and $q=q_i=u(M_i^{1+\gamma})\cdot p$ we have
\begin{eqnarray*}
\left|\frac1{|\tilde P_{M_i}|}\sum\limits_{n\in\tilde P_{M_i}}f(u(n)\cdot p)\right|\ll\frac {((1+\gamma)M_i^\gamma)^\frac12}{r^\beta}\|f\|_{\infty,l}.
\end{eqnarray*}
Since $M_i\to\infty$ and $r=T_i^{a}\cdot\eta(a(-\ln T_i/\alpha)\cdot q_i)^b\sim T_i^a$, as long as $\gamma<a\cdot\beta$, we have $$\lim_{i\to\infty}\frac1{|\tilde P_{M_i}|}\sum\limits_{n\in \tilde P_{M_i}}f(u(n)\cdot p)=0\textup{ and }\lim_{i\to\infty}\frac1{|P_{M_i}|}\sum\limits_{n\in P_{M_i}}f(u(n)\cdot p)=0.$$ Consequently, for any $f\in C_c^\infty(G/\Gamma)$, we have $$\lim_{i\to\infty}\frac1{|P_{M_i}|}\sum\limits_{n\in P_{M_i}}f(u(n)\cdot p)=\int_{G/\Gamma} fd\mu.$$ This proves that $\{u(n^{1+\gamma})\cdot p\}$ is dense in $G/\Gamma$ if $\gamma<\gamma_0:=a\cdot\beta$.

Now assume that $\{\phi(n)\cdot p\}_{n\in\mathbb N}$ diverges. One can compute that for any $\delta\in(0,1)$ and $n\geq1$, the element $\phi(n+\delta)\cdot\phi(n)^{-1}$ lies in a compact subset in $H^{\sigma_1}$. It implies that $\{\phi(x)\cdot p\}_{x>0}$ diverges as well, and hence by Lemma~\ref{l51}, there is a $k$-dimensional abelian unipotent subgroup $F$ containing $U$ such that $F\cap\Stab(p)$ is a lattice in $F$. This completes the proof of Theorem~\ref{mthm2}.
\end{proof}

\vspace{0.2in}
\noindent\textbf{Acknowledgements.} The author would like to thank Professor Andreas Str\"ombergsson and Samuel Edwards for valuable discussions about Theorem~\ref{equid} regarding the effective equidistribution of horospherical orbits in homogeneous spaces.

\begin{bibdiv}
\begin{biblist}

\bib{BFG}{unpublished}{
      author={Bj\"orklund, Michael},
      author={Fregoli, Reynold},
      author={Gorodnik, Alexander},
       title={Decorrelation estimates for translated measures under diagonal
  flows},
        note={https://arxiv.org/abs/2311.11942},
}

\bib{BG23}{article}{
      author={Bj\"orklund, Michael},
      author={Gorodnik, Alexander},
       title={Effective multiple equidistribution of translated measures},
        date={2023},
        ISSN={1073-7928,1687-0247},
     journal={Int. Math. Res. Not. IMRN},
      number={1},
       pages={210\ndash 242},
         url={https://doi.org/10.1093/imrn/rnab274},
      review={\MR{4530108}},
}

\bib{B1}{article}{
      author={Bourgain, J.},
       title={On the pointwise ergodic theorem on {$L^p$} for arithmetic sets},
        date={1988},
        ISSN={0021-2172},
     journal={Israel J. Math.},
      volume={61},
      number={1},
       pages={73\ndash 84},
         url={https://doi.org/10.1007/BF02776302},
      review={\MR{937582}},
}

\bib{BV16}{article}{
      author={Browning, Tim},
      author={Vinogradov, Ilya},
       title={Effective {R}atner theorem for
  {$\text{SL}(2,\Bbb{R})\ltimes\Bbb{R}^2$} and gaps in {$\sqrt{n}$} modulo 1},
        date={2016},
        ISSN={0024-6107,1469-7750},
     journal={J. Lond. Math. Soc. (2)},
      volume={94},
      number={1},
       pages={61\ndash 84},
         url={https://doi.org/10.1112/jlms/jdw025},
      review={\MR{3532164}},
}

\bib{BZ17}{article}{
      author={Buenger, C.~Davis},
      author={Zheng, Cheng},
       title={Non-divergence of unipotent flows on quotients of rank-one
  semisimple groups},
        date={2017},
        ISSN={0143-3857,1469-4417},
     journal={Ergodic Theory Dynam. Systems},
      volume={37},
      number={1},
       pages={103\ndash 128},
         url={https://doi.org/10.1017/etds.2015.43},
      review={\MR{3590496}},
}

\bib{CY24}{article}{
      author={Chow, Sam},
      author={Yang, Lei},
       title={Effective equidistribution for multiplicative {D}iophantine
  approximation on lines},
        date={2024},
        ISSN={0020-9910,1432-1297},
     journal={Invent. Math.},
      volume={235},
      number={3},
       pages={973\ndash 1007},
         url={https://doi.org/10.1007/s00222-023-01233-1},
      review={\MR{4701882}},
}

\bib{DKL}{article}{
      author={Dabbs, Kathryn},
      author={Kelly, Michael},
      author={Li, Han},
       title={Effective equidistribution of translates of maximal horospherical
  measures in the space of lattices},
        date={2016},
        ISSN={1930-5311,1930-532X},
     journal={J. Mod. Dyn.},
      volume={10},
       pages={229\ndash 254},
         url={https://doi.org/10.3934/jmd.2016.10.229},
      review={\MR{3538863}},
}

\bib{D}{article}{
      author={Dani, S.~G.},
       title={Invariant measures and minimal sets of horospherical flows},
        date={1981},
        ISSN={0020-9910},
     journal={Invent. Math.},
      volume={64},
      number={2},
       pages={357\ndash 385},
         url={https://doi.org/10.1007/BF01389173},
      review={\MR{629475}},
}

\bib{E}{article}{
      author={Edwards, Samuel~C.},
       title={On the rate of equidistribution of expanding translates of
  horospheres in {$\Gamma\backslash G$}},
        date={2021},
        ISSN={0010-2571,1420-8946},
     journal={Comment. Math. Helv.},
      volume={96},
      number={2},
       pages={275\ndash 337},
         url={https://doi.org/10.4171/cmh/513},
      review={\MR{4277274}},
}

\bib{EMV}{article}{
      author={Einsiedler, M.},
      author={Margulis, G.},
      author={Venkatesh, A.},
       title={Effective equidistribution for closed orbits of semisimple groups
  on homogeneous spaces},
        date={2009},
        ISSN={0020-9910,1432-1297},
     journal={Invent. Math.},
      volume={177},
      number={1},
       pages={137\ndash 212},
         url={https://doi.org/10.1007/s00222-009-0177-7},
      review={\MR{2507639}},
}

\bib{LG03}{article}{
      author={Flaminio, Livio},
      author={Forni, Giovanni},
       title={Invariant distributions and time averages for horocycle flows},
        date={2003},
        ISSN={0012-7094,1547-7398},
     journal={Duke Math. J.},
      volume={119},
      number={3},
       pages={465\ndash 526},
         url={https://doi.org/10.1215/S0012-7094-03-11932-8},
      review={\MR{2003124}},
}

\bib{LGJ16}{article}{
      author={Flaminio, Livio},
      author={Forni, Giovanni},
      author={Tanis, James},
       title={Effective equidistribution of twisted horocycle flows and
  horocycle maps},
        date={2016},
        ISSN={1016-443X,1420-8970},
     journal={Geom. Funct. Anal.},
      volume={26},
      number={5},
       pages={1359\ndash 1448},
         url={https://doi.org/10.1007/s00039-016-0385-4},
      review={\MR{3568034}},
}

\bib{G}{article}{
      author={Gorodnik, Alexander},
       title={Open problems in dynamics and related fields},
        date={2007},
        ISSN={1930-5311},
     journal={J. Mod. Dyn.},
      volume={1},
      number={1},
       pages={1\ndash 35},
         url={https://doi.org/10.3934/jmd.2007.1.1},
      review={\MR{2261070}},
}

\bib{GT}{article}{
      author={Green, Ben},
      author={Tao, Terence},
       title={The quantitative behaviour of polynomial orbits on nilmanifolds},
        date={2012},
        ISSN={0003-486X,1939-8980},
     journal={Ann. of Math. (2)},
      volume={175},
      number={2},
       pages={465\ndash 540},
         url={https://doi.org/10.4007/annals.2012.175.2.2},
      review={\MR{2877065}},
}

\bib{K1}{article}{
      author={Katz, Asaf},
       title={On mixing and sparse ergodic theorems},
        date={2021},
        ISSN={1930-5311},
     journal={J. Mod. Dyn.},
      volume={17},
       pages={1\ndash 32},
         url={https://doi.org/10.3934/jmd.2021001},
      review={\MR{4251936}},
}

\bib{K2}{article}{
      author={Khalil, Osama},
       title={Pointwise equidistribution and translates of measures on
  homogeneous spaces},
        date={2020},
        ISSN={0143-3857},
     journal={Ergodic Theory Dynam. Systems},
      volume={40},
      number={2},
       pages={453\ndash 477},
         url={https://doi.org/10.1017/etds.2018.44},
      review={\MR{4048301}},
}

\bib{KM96}{incollection}{
      author={Kleinbock, D.~Y.},
      author={Margulis, G.~A.},
       title={Bounded orbits of nonquasiunipotent flows on homogeneous spaces},
        date={1996},
   booktitle={Sina\u i's {M}oscow {S}eminar on {D}ynamical {S}ystems},
      series={Amer. Math. Soc. Transl. Ser. 2},
      volume={171},
   publisher={Amer. Math. Soc., Providence, RI},
       pages={141\ndash 172},
         url={https://doi.org/10.1090/trans2/171/11},
      review={\MR{1359098}},
}

\bib{KM99}{article}{
      author={Kleinbock, D.~Y.},
      author={Margulis, G.~A.},
       title={Logarithm laws for flows on homogeneous spaces},
        date={1999},
        ISSN={0020-9910},
     journal={Invent. Math.},
      volume={138},
      number={3},
       pages={451\ndash 494},
         url={https://doi.org/10.1007/s002220050350},
      review={\MR{1719827}},
}

\bib{KM12}{incollection}{
      author={Kleinbock, D.~Y.},
      author={Margulis, G.~A.},
       title={On effective equidistribution of expanding translates of certain
  orbits in the space of lattices},
        date={2012},
   booktitle={Number theory, analysis and geometry},
   publisher={Springer, New York},
       pages={385\ndash 396},
         url={https://doi.org/10.1007/978-1-4614-1260-1_18},
      review={\MR{2867926}},
}

\bib{L}{article}{
      author={Leibman, A.},
       title={Pointwise convergence of ergodic averages for polynomial
  sequences of translations on a nilmanifold},
        date={2005},
        ISSN={0143-3857,1469-4417},
     journal={Ergodic Theory Dynam. Systems},
      volume={25},
      number={1},
       pages={201\ndash 213},
         url={https://doi.org/10.1017/S0143385704000215},
      review={\MR{2122919}},
}

\bib{LMW23}{article}{
      author={Lindenstrauss, Elon},
      author={Mohammadi, Amir},
      author={Wang, Zhiren},
       title={Polynomial effective equidistribution},
        date={2023},
        ISSN={1631-073X,1778-3569},
     journal={C. R. Math. Acad. Sci. Paris},
      volume={361},
       pages={507\ndash 520},
         url={https://doi.org/10.5802/crmath.411},
      review={\MR{4548163}},
}

\bib{M84}{article}{
      author={Margulis, G.~A.},
       title={Arithmeticity of the irreducible lattices in the semisimple
  groups of rank greater than {$1$}},
        date={1984},
        ISSN={0020-9910,1432-1297},
     journal={Invent. Math.},
      volume={76},
      number={1},
       pages={93\ndash 120},
         url={https://doi.org/10.1007/BF01388494},
      review={\MR{739627}},
}

\bib{M1}{incollection}{
      author={Margulis, G.~A.},
       title={Discrete subgroups and ergodic theory},
        date={1989},
   booktitle={Number theory, trace formulas and discrete groups ({O}slo,
  1987)},
   publisher={Academic Press, Boston, MA},
       pages={377\ndash 398},
      review={\MR{993328}},
}

\bib{M91}{book}{
      author={Margulis, G.~A.},
       title={Discrete subgroups of semisimple {L}ie groups},
      series={Ergebnisse der Mathematik und ihrer Grenzgebiete (3) [Results in
  Mathematics and Related Areas (3)]},
   publisher={Springer-Verlag, Berlin},
        date={1991},
      volume={17},
        ISBN={3-540-12179-X},
         url={https://doi.org/10.1007/978-3-642-51445-6},
      review={\MR{1090825}},
}

\bib{MT}{article}{
      author={Margulis, G.~A.},
      author={Tomanov, G.~M.},
       title={Invariant measures for actions of unipotent groups over local
  fields on homogeneous spaces},
        date={1994},
        ISSN={0020-9910},
     journal={Invent. Math.},
      volume={116},
      number={1-3},
       pages={347\ndash 392},
         url={https://doi.org/10.1007/BF01231565},
      review={\MR{1253197}},
}

\bib{M04}{book}{
      author={Margulis, Grigoriy~A.},
       title={On some aspects of the theory of {A}nosov systems},
      series={Springer Monographs in Mathematics},
   publisher={Springer-Verlag, Berlin},
        date={2004},
        ISBN={3-540-40121-0},
         url={https://doi.org/10.1007/978-3-662-09070-1},
        note={With a survey by Richard Sharp: Periodic orbits of hyperbolic
  flows, Translated from the Russian by Valentina Vladimirovna Szulikowska},
      review={\MR{2035655}},
}

\bib{M2}{article}{
      author={McAdam, Taylor},
       title={Almost-prime times in horospherical flows on the space of
  lattices},
        date={2019},
        ISSN={1930-5311},
     journal={J. Mod. Dyn.},
      volume={15},
       pages={277\ndash 327},
         url={https://doi.org/10.3934/jmd.2019022},
      review={\MR{4042163}},
}

\bib{M12}{article}{
      author={Mohammadi, A.},
       title={A special case of effective equidistribution with explicit
  constants},
        date={2012},
        ISSN={0143-3857,1469-4417},
     journal={Ergodic Theory Dynam. Systems},
      volume={32},
      number={1},
       pages={237\ndash 247},
         url={https://doi.org/10.1017/S0143385710000799},
      review={\MR{2873169}},
}

\bib{MDW}{book}{
      author={Morris, Dave~Witte},
       title={Introduction to arithmetic groups},
   publisher={Deductive Press, [place of publication not identified]},
        date={2015},
        ISBN={978-0-9865716-0-2; 978-0-9865716-1-9},
      review={\MR{3307755}},
}

\bib{MN}{book}{
      author={Nathanson, Melvyn~B.},
       title={Additive number theory},
      series={Graduate Texts in Mathematics},
   publisher={Springer-Verlag, New York},
        date={1996},
      volume={164},
        ISBN={0-387-94656-X},
         url={https://doi.org/10.1007/978-1-4757-3845-2},
        note={The classical bases},
      review={\MR{1395371}},
}

\bib{PT18}{book}{
      author={Prinyasart, T},
       title={An effective equidistribution of diagonal translates of certain
  orbits in {$\rm{ASL}(3,\bold Z)\backslash\rm{ASL}(3,\bold R)$}},
        date={2018},
         url={ProQuest ID: Prinyasart_ucsd_0033D_17374. Merritt ID:
  ark:/13030/m5ms8q69. Retrieved from
  https://escholarship.org/uc/item/72g1567x},
        note={Ph.D. Thesis--UC San Diego},
}

\bib{R72}{book}{
      author={Raghunathan, M.~S.},
       title={Discrete subgroups of {L}ie groups},
      series={Ergebnisse der Mathematik und ihrer Grenzgebiete [Results in
  Mathematics and Related Areas]},
   publisher={Springer-Verlag, New York-Heidelberg},
        date={1972},
      volume={Band 68},
      review={\MR{507234}},
}

\bib{R1}{article}{
      author={Ratner, Marina},
       title={On {R}aghunathan's measure conjecture},
        date={1991},
        ISSN={0003-486X},
     journal={Ann. of Math. (2)},
      volume={134},
      number={3},
       pages={545\ndash 607},
         url={https://doi.org/10.2307/2944357},
      review={\MR{1135878}},
}

\bib{R2}{article}{
      author={Ratner, Marina},
       title={Raghunathan's topological conjecture and distributions of
  unipotent flows},
        date={1991},
        ISSN={0012-7094},
     journal={Duke Math. J.},
      volume={63},
      number={1},
       pages={235\ndash 280},
         url={https://doi.org/10.1215/S0012-7094-91-06311-8},
      review={\MR{1106945}},
}

\bib{SU}{article}{
      author={Sarnak, Peter},
      author={Ubis, Adri\'{a}n},
       title={The horocycle flow at prime times},
        date={2015},
        ISSN={0021-7824},
     journal={J. Math. Pures Appl. (9)},
      volume={103},
      number={2},
       pages={575\ndash 618},
         url={https://doi.org/10.1016/j.matpur.2014.07.004},
      review={\MR{3298371}},
}

\bib{Sh}{article}{
      author={Shah, Nimish~A.},
       title={Limit distributions of polynomial trajectories on homogeneous
  spaces},
        date={1994},
        ISSN={0012-7094},
     journal={Duke Math. J.},
      volume={75},
      number={3},
       pages={711\ndash 732},
         url={https://doi.org/10.1215/S0012-7094-94-07521-2},
      review={\MR{1291701}},
}

\bib{S21}{article}{
      author={Shi, Ronggang},
       title={Expanding cone and applications to homogeneous dynamics},
        date={2021},
        ISSN={1073-7928,1687-0247},
     journal={Int. Math. Res. Not. IMRN},
      number={9},
       pages={7060\ndash 7095},
         url={https://doi.org/10.1093/imrn/rnz052},
      review={\MR{4251297}},
}

\bib{LS1}{unpublished}{
      author={Streck, Lauritz},
       title={Non-concentration of primes in {$\Gamma\backslash{\rm
  PSL}_2(\bold R)$}},
        note={https://arxiv.org/abs/2303.07781},
}

\bib{LS2}{unpublished}{
      author={Streck, Lauritz},
       title={On equidistribution of polynomial sequences in quotients of
  {${\rm PSL}_2(\bold R)$}},
        note={https://arxiv.org/abs/2305.02730},
}

\bib{S}{article}{
      author={Str\"ombergsson, Andreas},
       title={On the deviation of ergodic averages for horocycle flows},
        date={2013},
        ISSN={1930-5311,1930-532X},
     journal={J. Mod. Dyn.},
      volume={7},
      number={2},
       pages={291\ndash 328},
         url={https://doi.org/10.3934/jmd.2013.7.291},
      review={\MR{3106715}},
}

\bib{S15}{article}{
      author={Str\"ombergsson, Andreas},
       title={An effective {R}atner equidistribution result for
  {$\rm{SL}(2,\Bbb{R})\ltimes\Bbb{R}^2$}},
        date={2015},
        ISSN={0012-7094,1547-7398},
     journal={Duke Math. J.},
      volume={164},
      number={5},
       pages={843\ndash 902},
         url={https://doi.org/10.1215/00127094-2885873},
      review={\MR{3332893}},
}

\bib{TJ18}{article}{
      author={Tanis, James},
       title={Exponential multiple mixing for some partially hyperbolic flows
  on products of {${\rm{PSL}}(2,\Bbb{R})$}},
        date={2018},
        ISSN={1078-0947,1553-5231},
     journal={Discrete Contin. Dyn. Syst.},
      volume={38},
      number={3},
       pages={989\ndash 1006},
         url={https://doi.org/10.3934/dcds.2018042},
      review={\MR{3808984}},
}

\bib{TV}{article}{
      author={Tanis, James},
      author={Vishe, Pankaj},
       title={Uniform bounds for period integrals and sparse equidistribution},
        date={2015},
        ISSN={1073-7928},
     journal={Int. Math. Res. Not. IMRN},
      number={24},
       pages={13728\ndash 13756},
         url={https://doi.org/10.1093/imrn/rnv115},
      review={\MR{3436162}},
}

\bib{V}{article}{
      author={Venkatesh, Akshay},
       title={Sparse equidistribution problems, period bounds and
  subconvexity},
        date={2010},
        ISSN={0003-486X},
     journal={Ann. of Math. (2)},
      volume={172},
      number={2},
       pages={989\ndash 1094},
         url={https://doi.org/10.4007/annals.2010.172.989},
      review={\MR{2680486}},
}

\bib{LY22}{unpublished}{
      author={Yang, Lei},
       title={Effective version of ratner's equidistribution theorem for {${\rm
  SL}(3,\bold R)$}},
        note={https://arxiv.org/abs/2208.02525},
}

\bib{Z}{article}{
      author={Zheng, Cheng},
       title={Sparse equidistribution of unipotent orbits in finite-volume
  quotients of {$\text{PSL}(2,\Bbb{R})$}},
        date={2016},
        ISSN={1930-5311},
     journal={J. Mod. Dyn.},
      volume={10},
       pages={1\ndash 21},
         url={https://doi.org/10.3934/jmd.2016.10.1},
      review={\MR{3471070}},
}

\bib{Z24}{article}{
      author={Zheng, Cheng},
       title={On the density of some sparse horocycles},
        date={2024},
        ISSN={0253-4142,0973-7685},
     journal={Proc. Indian Acad. Sci. Math. Sci.},
      volume={134},
      number={1},
       pages={Paper No. 6, 25},
         url={https://doi.org/10.1007/s12044-023-00774-y},
      review={\MR{4707786}},
}

\end{biblist}
\end{bibdiv}

\end{document}